\theoremstyle{plain}
\newtheorem{theorem}{Theorem}[section]
\newtheorem{claim}[theorem]{Claim}
\newtheorem{prop}[theorem]{Proposition}
\newtheorem{conjecture}[theorem]{Conjecture}
\theoremstyle{remark}
\newcommand{\RR}[0]{{\mathbb R}}
\newcommand{\E}[0]{\mathbb{E}}
\newcommand{\sub}[0]{\subseteq}
\newcommand{\sm}[0]{\setminus}
\newcommand{\beq}[1]{\begin{equation}\label{#1}}
\newcommand{\enq}[0]{\end{equation}}
\newcommand{\A}[0]{{\mathcal A}}
\newcommand{\B}[0]{{\mathcal B}}
\newcommand{\cee}[0]{{\mathcal C}}
\newcommand{\f}[0]{{\mathcal F}}
\newcommand{\g}[0]{{\mathcal G}}
\newcommand{\U}[0]{{\mathcal U}}
\newcommand{\ra}[0]{\rightarrow}
\newcommand{\Ra}[0]{\Rightarrow}
\newcommand{\0}[0]{\emptyset}
\newcommand{\C}[2]{\binom{#1}{#2}}
\newcommand{\Cc}[0]{\tbinom}
\newcommand{\ga}[0]{\alpha }
\newcommand{\gb}[0]{\beta }
\newcommand{\gc}[0]{\gamma }
\newcommand{\gd}[0]{\delta }
\newcommand{\gl}[0]{\lambda }
\newcommand{\gz}[0]{\zeta}
\newcommand{\supp}[0]{\mbox{\rm{supp}}}
\newcommand{\mn}[0]{\medskip\noindent}
\newcommand{\nin}[0]{\noindent}
\newcommand{\al}{\alpha}
\newcommand{\CSS}{\operatorname{C}^*}
\newcommand{\CSSS}{\operatorname{C}}
\newcommand\abs[1]{\left|#1\right|}
\newcommand{\tE}{\mathsf{E}}
\newcommand{\tw}{\mathsf{w}}
\newcommand{\ty}{\mathsf{y}}
\newcommand{\cB}{\mathcal{B} }
\newcommand{\cC}{\mathcal{C} }
\newcommand{\cE}{\mathcal{E} }
\newcommand{\cT}{\mathcal{T} }
\newcommand{\JJ}{R}
\title{On a problem of M.\ Talagrand}
\author{Keith Frankston}
\address{Center for Communications Research - Princeton, Princeton, NJ 08540, USA}
\email{k.frankston@fastmail.com}
\author{Jeff Kahn}
\address{Department of Mathematics, Rutgers University, Piscataway, NJ 08854, USA}
\email{jkahn@math.rutgers.edu}
\author{Jinyoung Park}
\thanks{The authors were supported by NSF grant DMS-1501962 and BSF Grant 2014290. The second author was also supported by NSF grant DMS-1954035, and the third directly by NSF
grant DMS-1926686
and indirectly by NSF grant CCF-1900460.}
\address{School of Mathematics, Institute for Advanced Study, Princeton, NJ 08540, USA}
\email{jpark@math.ias.edu}
\begin{document}

\maketitle
\begin{abstract}
We address a special case of a conjecture of M.\ Talagrand relating two
notions of ``threshold'' for an increasing family $\f$ of subsets of a finite set $V$.
The full conjecture implies equivalence of the ``Fractional Expectation-Threshold Conjecture,'' 
due to Talagrand and recently proved by the authors and B.\ Narayanan, and the (stronger) 
``Expectation-Threshold
Conjecture'' of the second author and G.\ Kalai.  
The conjecture under discussion here says there is a fixed $L$ such that if, for a given $\f$, 
$p\in [0,1]$ admits 
$\gl:2^V \ra \mathbb R^+$ with
\[
\mbox{$\sum_{S\sub F}\gl_S\geq 1 ~~\forall F\in \f$}
\]
and
\[
\mbox{$\sum_S\gl_Sp^{|S|} \leq 1/2$}
\]
(a.k.a.\ $\f$ is \emph{weakly p-small}),
then $p/L$ admits such a $\gl$ taking values in $\{0,1\}$
($\f$ is \emph{$(p/L)$-small}).  Talagrand showed this when $\gl$ 
is supported on singletons and suggested, as a more challenging test case, proving it when
$\gl$ is supported on pairs.  The present work provides such a proof.

\end{abstract}
\maketitle

\section{Introduction}\label{Intro}

Given a finite set $V$, write
$2^V$ for the power set of $V$
and, for $p\in [0,1]$, $\mu_p$ for the product
measure on $2^V$ given by $\mu_p(S) = p^{|S|}(1-p)^{|V\sm S|}$.
An $\f\sub 2^V$
is {\em increasing} 
if
$B\supseteq A\in\f\Ra B\in\f$.
For $\g\sub 2^V$
we use $\langle \g\rangle $
for the increasing family generated by $\g$, namely $ \{B\sub V:\exists A\in \g, B\supseteq A\}$.

We assume throughout that $\f\sub 2^V$ is increasing and not equal to $2^V,\0$.
Then $\mu_p(\f)(= \sum\{\mu_p(S): S \in \f\}$)
is strictly increasing in $p$,
and we define the {\em threshold}, $p_c(\f)$,
to be the unique $p$ for which
$\mu_p(\f)=1/2$.
(This is finer than the original Erd\H{o}s--R\'enyi notion,
according to which
$p^*=p^*(n)$ is \emph{\textbf{a}} threshold for $\f=\f_n$
if
$\mu_p(\f)\ra 0$ when $p\ll p^*$ and $\mu_p(\f)\ra 1$ when $p\gg p^*$.
That $p_c(\f)$ \emph{is} always an Erd\H{o}s--R\'enyi threshold 
follows from~\cite{BT}.)

Thresholds have been a---maybe \emph{the}---central concern of the study of random discrete structures 
(random graphs and hypergraphs, for example) since its initiation by Erd\H{o}s and R\'enyi~\cite{ER}, with much of that effort concerned with
identifying (Erd\H{o}s--R\'enyi) 
thresholds for specific properties (see~\cite{BBbook, JLR})---though it was 
not observed until~\cite{BT} that \emph{every} sequence of increasing properties
admits such a threshold. 

The main concern of this paper is the relation between the following two notions of
M.\ Talagrand~\cite{Talagrand4,Talagrand1,Talagrand}. 
(Our focus is Conjecture~\ref{littleTal} and our main result is Theorem~\ref{MT}; 
we will come to these following some motivation.)

Say $\f$ is
$p$-{\em small} if there is a $\g\sub 2^V$ such that
\beq{psmall1}
 \langle \g\rangle\supseteq \f
\enq
(that is, each member of $\f$
contains a member of $\g$)
and
\beq{psmall2}
\sum_{S\in \g}p^{|S|} \leq 1/2,
\enq
and set
$
q(\f)= \max\{\mbox{$p$ : $\f$ is $p$-small}\}.
$
Say $\f$ is
{\em weakly $p$-small} if there is a $\gl:2^V \ra \mathbb R^+$ ($:=[0,\infty)$)
such that
\beq{wpsmall1}
\sum_{S\sub F}\gl_S\geq 1 ~~\forall F\in \f
\enq
and
\beq{wpsmall2} 
\sum_S\gl_Sp^{|S|} \leq 1/2,
\enq 
and set
$
q_f(\f)= \max\{\mbox{$p$ : $\f$ is weakly $p$-small}\}.
$
As in~\cite{FKNP} we refer to $q(\f)$ and $q_f(\f)$ (respectively) as the \emph{expectation-threshold} and 
\emph{fractional expectation-threshold} of $\f$.  (Note the former is used slightly differently in ~\cite{KK}.)  
Notice that
\beq{q's}
q(\f)\leq q_f(\f)\leq p_c(\f).
\enq
(The first inequality is trivial and the second holds since, 
for $\gl$ as in \eqref{wpsmall1}, \eqref{wpsmall2} and $Y$
drawn from $\mu_p$,

\beq{triv.l.b.}
\mu_p(\f) \leq \sum_{F\in \f}\mu_p(F)\sum_{S\sub F}\gl_S  \leq
\sum_S\gl_S\mu_p(Y\supseteq S) =
\sum_S\gl_Sp^{|S|} \leq 1/2.) 
\enq

In particular, each of $q$, $q_f$ is a lower bound on $p_c$, and these turn out to be 
easily understood  (and to agree up to constant)
in many cases of interest; see~\cite{FKNP}.
The next two conjectures---respectively the main conjecture (Conjecture~1) of 
\cite{KK} and a sort of LP relaxation thereof suggested by 
Talagrand~\cite[Conjecture~8.3]{Talagrand}---say that these bounds are never far from the truth.
\begin{conjecture}\label{CKK}
There is a universal $K$ such that for every finite $V$ and increasing
$\f\sub 2^V$,
\[
p_c(\f) \le Kq(\f)\log |V|.
\]
\end{conjecture}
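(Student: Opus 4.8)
\medskip\noindent\textbf{A proof strategy.}
One route is to combine the now-established fractional bound $p_c(\f)=O(q_f(\f)\log\ell(\f))$ with Talagrand's conjectured inequality $q_f(\f)=O(q(\f))$, but the latter is exactly the (still open in general) problem this paper attacks; so I would argue directly, bypassing $q_f$. Here $\ell(\f)$ denotes the largest size of a minimal member of $\f$, so $\ell(\f)\le|V|$. The plan is to prove the self-strengthening \emph{Main Lemma}: there is a universal $K$ such that whenever $\f\sub 2^V$ is increasing and is \emph{not} $p$-small, then $\mu_{p'}(\f)>1/2$ with $p':=Kp\log(2\ell(\f))$ (and nothing is to be proved when $p'\ge 1$). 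Since $\f$ is not $p$-small for any $p>q(\f)$, letting $p\downarrow q(\f)$ gives $p_c(\f)\le Kq(\f)\log(2\ell(\f))\le Kq(\f)\log(2|V|)$, which is Conjecture~\ref{CKK} up to harmless constants. The only leverage ``$\f$ not $p$-small'' provides is that \emph{every} $\g$ with $\langle\g\rangle\supseteq\f$ has $\sum_{T\in\g}p^{|T|}>1/2$; in particular the minimal members of $\f$ (each of size $\le\ell(\f)$) witness this, the property passes to every increasing $\f'\supseteq\f$, and it descends --- at a cost of an additive $|W|p$ in the constant $1/2$ --- to the link $\f_W:=\{A\sub V\sm W:A\cup W\in\f\}$, since a cover $\h$ of $\f_W$ gives the cover $\h\cup\{\{v\}:v\in W\}$ of $\f$.

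The one external ingredient is a spread lemma of Alweiss--Lovett--Wu--Zhang type, in essentially the form underlying the proof of the Fractional Expectation--Threshold Conjecture: there is a universal $L$ such that if $\h$ is a nonempty family of subsets of a finite set, each of size $\le k$, that is \emph{$q$-spread} (i.e.\ $|\{S\in\h:S\supseteq T\}|\le q^{|T|}|\h|$ for every $T$), then $\mu_{Lq\log(2k)}(\langle\h\rangle)>1/2$. Its proof runs the greedy ``match a member of $\h$ into the random set'' in about $\log k$ geometrically shrinking stages, using $q$-spreadness to bound the number of admissible matched parts at each stage, so that a random set missing $\langle\h\rangle$ too often would yield an impossibly economical injective encoding of the members of $\h$. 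I expect only routine bookkeeping is needed to put this in the ``arbitrary finite ground set, mixed sizes $\le k$'' shape used below.

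The real content --- the whole gain over the fractional case, where a fractional cover is ``$q$-spread for free'' --- is the \emph{bridge} from the bare hypothesis ``not $p$-small'' (which supplies only an arbitrary \emph{integral} cover, carrying no spreadness) to a legitimate use of the spread lemma. I would process the scales $\ell(\f),\ell(\f)/2,\ldots,1$ in turn, accumulating a committed set inside a single $W\sim\mu_{p'}$. Entering scale $i$ one has a residual link $\f^{(i-1)}$ (the link of $\f$ by the set committed so far), still not $\Theta(p)$-small, all of whose minimal members have size $\le\ell(\f)/2^{i-1}$; split these into those of size $\le\ell(\f)/2^{i}$ and the larger ones, and note that non-$\Theta(p)$-smallness forces the larger ones to carry a constant fraction of the weight $\sum_{T}p^{|T|}$ (else the small ones, together with a cheap cover of the rest, would refute non-smallness). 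A weighted pigeonhole over sizes then pins a single size-class $s\in(\ell(\f)/2^{i},\ell(\f)/2^{i-1}]$ among the larger fragments whose cardinality is at least roughly $(\ell(\f)\,p^{s})^{-1}$ --- a quantitative ``spread'' lower bound --- whereupon the greedy matching/encoding mechanism of the spread lemma, fed this size-class and run at the current scale, either commits to $W$ a short fragment of a genuine member of $\f$ (so the largest minimal member of the residual link drops to size $\le\ell(\f)/2^{i}$) or produces the count contradiction. After the last scale $\emptyset$ is a minimal member of the residual link, i.e.\ the committed set lies in $\f$, giving $\mu_{p'}(\f)>1/2$.

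The main obstacle, I expect, is to extract the needed spreadness from the single hypothesis ``not $p$-small'' scale by scale while paying the spread lemma's logarithmic overhead only \emph{once} overall rather than once per scale: a naive ``invoke the scale-$k$ spread lemma separately at each $k=\ell(\f)/2^{i}$'' costs density $\sim p\sum_{i}\log(\ell(\f)/2^{i})=\Theta(p\log^{2}\ell(\f))$, an extra logarithm. Merging all $\lceil\log_2\ell(\f)\rceil$ scales into one greedy process of total density $\Theta(p\log\ell(\f))$ --- keeping all probabilities and constants consistent across the scales --- is where the real work lies; by contrast the reduction to the Main Lemma is formal and the spread lemma is essentially off the shelf.
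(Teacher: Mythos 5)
This statement is Conjecture~\ref{CKK} and is \emph{not} proved in the paper; the paper's result, Theorem~\ref{MT}, is a special case of Conjecture~\ref{littleTal}, which together with~\cite{FKNP} would yield Conjecture~\ref{CKK} if established in full. So there is no proof in the paper to compare against, and what follows assesses your proposal on its own terms.

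The scaffolding (reduce to a Main Lemma about non-$p$-small families, process scales while linking, invoke an ALWZ-type spread lemma) is reasonable in spirit, but the bridge is broken at the key step. Your weighted pigeonhole yields a size class $\h$ of minimal members with $|\h|$ at least roughly $(\ell\, p^{s})^{-1}$, and you call this ``a quantitative `spread' lower bound''---it is not. A cardinality bound gives no control on the overconcentration ratios $|\{S\in\h:S\supseteq T\}|/|\h|$: a sunflower with a large common core meets your cardinality bound and is essentially $1$-spread, so the spread lemma's greedy/encoding mechanism produces neither a committed fragment nor a counting contradiction. Non-$p$-smallness constrains every cover of the \emph{whole} family, but it does not by itself force local spreadness of any single size class of minimal members; this is exactly what separates the integral conjecture from the fractional one, where the certificate $\lambda$ supplies the spreadness for free. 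Your intermediate claim that ``non-$\Theta(p)$-smallness forces the larger ones to carry a constant fraction of the weight'' likewise has no mechanism attached that converts weight into spreadness. The difficulty you highlight in the final paragraph (paying the $\log\ell$ overhead once rather than per scale) is genuine but secondary; it arises only after a spreadness input has been secured, and securing that input is what is missing.

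For context: Conjecture~\ref{CKK} was proved subsequently to this paper by Park and Pham (2022), by a route that does \emph{not} try to extract spreadness from non-smallness. Their argument works directly with covers: after exposing a random $W$, one uses a cover $\g$ of the current family to define, for each surviving set, a ``minimum fragment'' $G\setminus W$ with $G\in\g$, and a counting/encoding argument bounds the expected total cost of the fragment cover. Iterating $O(\log\ell)$ rounds either lands a member of $\f$ inside $W$ or produces a cover of cost at most $1/2$, refuting non-smallness. The structural role you want a spread subfamily to play is there played by the cover $\g$ itself---the only structure the integral hypothesis actually provides.
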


\begin{conjecture}\label{CT}
There is a universal $K$ such that for every finite $V$ and increasing
$\f\sub 2^V$,
\[   
p_c(\f) \le Kq_f(\f)\log |V|.
\]   
\end{conjecture}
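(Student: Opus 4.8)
The plan is to reduce Conjecture~\ref{CT} to a combinatorial ``spread lemma'' about random subsets, and then to prove that lemma. For a family $\h\sub 2^V$ and $S\sub V$, write $\h_S=\{H\in\h:H\supseteq S\}$, and call a probability distribution $\nu$ on $\h$ \emph{$q$-spread} if $\nu(\h_S)\le q^{|S|}$ for every $S\ne\emptyset$. The statement to aim for is: \emph{there is a universal $L$ such that, whenever $\nu$ is a $q$-spread probability distribution on a family $\h$ of subsets of $V$ each of size at most $\ell$, one has $\mu_p(\langle\h\rangle)>1/2$ for all $p\ge Lq\log(2\ell)$.}

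To get Conjecture~\ref{CT} from this I would first pass, via LP duality, from a fractional cover to a spread packing. Put $q:=q_f(\f)$ and let $\ell$ be the largest size of a minimal member of $\f$, so $\ell\le|V|$. Weak $q$-smallness says that the covering LP $\min\{\sum_S\gl_Sq^{|S|}:\gl\ge0,\ \sum_{S\sub F}\gl_S\ge1\ \forall F\in\f\}$ has value at most $1/2$; since $q$ is the \emph{largest} $p$ for which this holds and the optimal value is continuous and nondecreasing in $p$, the value is in fact exactly $1/2$. By LP duality its dual, $\max\{\sum_F y_F:y\ge0,\ \sum_{F\supseteq S}y_F\le q^{|S|}\ \forall S\}$, also has value $1/2$, attained by some $y$ which (collapsing any weight on a non-minimal $F$ onto a minimal $F'\sub F$, which only relaxes the constraints) we may take supported on minimal members of $\f$. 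Then $\nu:=2y$ is a probability distribution on minimal members, all of size $\le\ell$, with $\nu(\h_S)\le 2q^{|S|}\le(2q)^{|S|}$ for every $S\ne\emptyset$; i.e.\ $\nu$ is $(2q)$-spread. Since $\langle\supp\nu\rangle\sub\f$ ($\f$ being increasing), the spread lemma yields $\mu_p(\f)\ge\mu_p(\langle\supp\nu\rangle)>1/2$ for $p=2Lq\log(2\ell)$, hence $p_c(\f)<2Lq_f(\f)\log(2\ell)\le 2Lq_f(\f)\log(2|V|)$, which is Conjecture~\ref{CT} (with $\log|V|$ in fact improved to $\log(2\ell)$).

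For the spread lemma itself I would build $W$ in rounds. Fix $t=\lceil\log_2(2\ell)\rceil$ and $\gamma\asymp q$ with $t\gamma\le p$; take $W_1\dots W_t$ independent with $W_i\sim\mu_\gamma$ and put $W=W_1\cup\cdots\cup W_t$. Since ``$W\supseteq H$ for some $H\in\h$'' is an increasing event and $W$ is stochastically below $\mu_p$, it is enough to show this event has probability $>1/2$. Revealing the $W_i$ one at a time, and writing $U_i=W_1\cup\cdots\cup W_i$, I would run a potential argument on the ``fragments'' $H\sm U_i$ ($H\in\h$), aiming for a round at which some fragment becomes empty --- i.e.\ some $H\sub W$. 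The point of $q$-spreadness is that it keeps the fragments from clustering, so that a fresh $W_i$ --- of expected size $\gamma|V|$, comfortably larger than its typical overlap with a single fragment --- nevertheless makes definite multiplicative progress on the relevant potential; after $t\asymp\log\ell$ rounds the potential has crossed the threshold below which some fragment must be empty. Concretely this would be packaged as a (super)martingale or compression estimate, in which, conditioned on $U_{i-1}$, the probability that round $i$ fails to improve the potential is small enough that, summed over the $t$ rounds and the $O(\log\ell)$ relevant scales of fragment size, the total failure probability is $<1/2$.

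The hard part is exactly this last, one-round estimate --- the combinatorial heart of the matter, and essentially the content of~\cite{FKNP}. The naive potential, tracking the literal minimum fragment $\min_{H\in\h}|H\sm U|$ and asking it to halve each round, does \emph{not} work: when $q\ll1$ a single member of $\h$ cannot be expected to lose half of its remaining fragment in one round (its fragment shrinks only by a factor $1-\Theta(q)$ in expectation), so the argument must instead exploit the \emph{multiplicity} of members forced by spreadness and track a subtler, suitably tilted partition-function-type quantity over fragments that really does improve by a constant factor while only $\gamma=O(q)$ is spent per round. Arranging the constants and the precise point at which spreadness is invoked so that $O(\log\ell)$ rounds suffice is the delicate step; by comparison the reduction above and the round-by-round bookkeeping --- the union bound over rounds, the comparison between $\mu_p$ and a union of $\mu_\gamma$'s, and the telescoping of the potential --- are routine.
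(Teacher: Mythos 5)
This statement is a \emph{conjecture} in the present paper; the authors do not prove it here, they cite \cite{FKNP} for a proof of the stronger Conjecture~\ref{CT'} (which implies it). So there is no in-paper proof to compare against. What you have written is a faithful outline of the route actually taken in \cite{FKNP}: pass by LP duality from a fractional cover of cost $\le 1/2$ at $q=q_f(\f)$ to a $(2q)$-spread probability measure supported on minimal members of $\f$, and then invoke a ``spread lemma'' asserting that a $q$-spread $\ell$-bounded family is hit by a $\mu_p$-random set with probability $>1/2$ once $p\gtrsim q\log\ell$. The duality step you give is correct and carefully stated (including the observation that at $p=q_f$ the LP value is exactly $1/2$, that weight on non-minimal sets can be collapsed onto minimal subsets without violating the packing constraints, and that $2q^{|S|}\le(2q)^{|S|}$ for $|S|\ge1$). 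You even recover the sharper $\log\ell$ form of \eqref{CT'}.

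The gap is the one you yourself flag: the spread lemma is not proved. Your third paragraph correctly diagnoses why the naive potential (``make the smallest fragment halve each round'') cannot work when $q\ll 1$, and correctly identifies that one must exploit the multiplicity of near-minimal fragments that spreadness forces; but a statement like ``track a subtler, suitably tilted partition-function-type quantity that really does improve by a constant factor'' is a description of the difficulty, not a resolution of it. That single round-by-round estimate is the entire content of \cite{FKNP} (and before it, of the Alweiss--Lovett--Wu--Zhang sunflower breakthrough from which the technique descends); without it the argument is a reduction to an unproved lemma. As a plan of attack it is the right one, and the bookkeeping (comparison of $\mu_p$ to a union of $\mu_\gamma$'s, the union bound over rounds, the telescoping) is indeed routine given the lemma; but as submitted this is an outline, not a proof, and the missing step is precisely the hard one.
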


\nin
Talagrand \cite[Conjecture~8.5]{Talagrand} 
also proposes the following strengthening of Conjecture~\ref{CT},
in which $\ell(\f)$ is the maximum size of a minimal member of $\f$.

\begin{conjecture}\label{CT'}
There is a universal $K > 0$ such that for every finite $V$ and increasing
$\f\sub 2^V$,
\[   
p_c(\f)< Kq_f(\f)\log \ell(\f).
\]   
\end{conjecture}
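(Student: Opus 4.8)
The plan is to reduce, by LP duality, to a ``spread'' statement and then feed it into the Rao--Alweiss--Lovett--Wu--Zhang spread/sunflower machinery, taking the uniformity parameter to be $\ell:=\ell(\f)$ (cf.\ \cite{FKNP}). It suffices to produce a universal $L$ with $p_c(\f)<Lq\log(2\ell)$ for every $q>q_f(\f)$; letting $q\downarrow q_f(\f)$ then yields the assertion (reading $\log\ell$ as $\log(2\ell)$, which is harmless --- when $\ell=1$ one checks $p_c(\f)\le 2q_f(\f)$ directly).

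Fix $q>q_f(\f)$, so $\f$ is not weakly $q$-small. Weak $q$-smallness is exactly the statement that the covering LP $\min\{\sum_S\gl_S q^{|S|}:\ \gl\ge 0,\ \sum_{S\subseteq F}\gl_S\ge 1\ \forall F\in\f\}$ has value $\le 1/2$; since $\f$ is increasing, only the constraints for minimal $F$ (all of size $\le\ell$) are effective, so this is a finite LP, and by LP duality its value equals $\max\{\sum_F x_F:\ x:\f\to\RR^+,\ \sum_{F\supseteq S}x_F\le q^{|S|}\ \forall S\}$ (the case $S=\0$ forcing $\sum_F x_F\le 1$). Since sliding weight from a non-minimal member onto a minimal member below it only sharpens these constraints, the failure of weak $q$-smallness furnishes such an $x$, supported on minimal members of $\f$, with $\sum_F x_F>1/2$. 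Let $\h=\supp(x)$ --- so $\langle\h\rangle\subseteq\f$ and every member of $\h$ has size $\le\ell$ --- and normalize $x$ to a probability measure $\mu$ on $\h$; since $\sum_F x_F>1/2$ we get $\mu(\{T\in\h:T\supseteq S\})\le 2q^{|S|}$ for all $S\ne\0$, i.e.\ $\mu$ is \emph{$2q$-spread}.

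Now invoke the spread lemma: there is a universal $L_0$ such that whenever $\mu$ is a $\kappa$-spread probability measure on a family $\h$ all of whose members have size $\le\ell$, one has $\mu_{L_0\kappa\log(2\ell)}(\langle\h\rangle)>1/2$. Applied with $\kappa=2q$ this gives $\mu_{2L_0q\log(2\ell)}(\langle\h\rangle)>1/2$; since $\langle\h\rangle\subseteq\f$ it follows that $\mu_{2L_0q\log(2\ell)}(\f)>1/2$ and hence $p_c(\f)<2L_0q\log(2\ell)$, which is what we wanted with $L=2L_0$.

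The real content is the spread lemma in this form --- for families merely bounded by $\ell$ (not strictly $k$-uniform) and with loss only $\log(2\ell)$, not $\log|V|$. Its proof is a version of Rao's entropy/encoding argument for the sunflower problem: take $W$ to be an $(L_0\kappa\log(2\ell))$-random subset of $V$; if the probability that $W$ contains a member of $\h$ were $\le 1/2$, then, using $\kappa$-spreadness to bound the number of members of $\h$ that can agree with a fixed one on $W$, a uniformly random $T\in\h$ could be encoded, given $W$, in strictly fewer than $\log|\h|$ bits --- a contradiction. The difficult part is to carry the encoding out so that the bit-count genuinely closes and to handle the non-uniformity of $\h$ (e.g.\ by bucketing its members by size); the LP-duality reduction above is routine.
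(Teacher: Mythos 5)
The paper does not prove this statement: despite being labelled a \emph{Conjecture}, the text immediately following it defers the proof to~\cite{FKNP}. There is thus no in-paper argument to compare against, and the relevant benchmark is the proof in~\cite{FKNP}, of which your sketch is a faithful outline. Your LP-duality reduction is correct as written: when $\f$ is not weakly $q$-small, the dual of the covering LP gives $x\ge 0$ with $\sum x_F>1/2$ and $\sum_{F\supseteq S}x_F\le q^{|S|}$ for all $S$; pushing mass down to minimal members only sharpens these constraints; and normalizing yields a $(2q)$-spread probability measure on an $\ell$-bounded family $\h$ with $\langle\h\rangle\subseteq\f$, from which the stated spread lemma (a $\kappa$-spread, $\ell$-bounded $\h$ has $\mu_{O(\kappa\log\ell)}(\langle\h\rangle)>1/2$) closes the argument. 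But that lemma \emph{is} the main theorem of~\cite{FKNP}, and, as you yourself flag, it carries the entire weight: making Rao's encoding actually close, handling the non-uniformity, and running the iterated fragmentation is the hard part and occupies most of that paper. So you have a correct high-level reduction and a correct identification and citation of the key lemma, along precisely the route the present paper points to, but not a self-contained proof.
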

\nin
Conjecture~\ref{CT'} is proved in~\cite{FKNP}, to which we also refer for discussion of
the very strong consequences that originally motivated Conjecture~\ref{CKK},
but follow just as easily from Conjecture~\ref{CT}.

Turning, finally, to the business at hand, we are interested in the following conjecture
of Talagrand~\cite[Conjecture 6.3]{Talagrand}, which says that the parameters $q$ and
$q_f$ are in fact not very different.
\begin{conjecture}\label{littleTal}
There is a fixed L such that, for any $\f$,
$~q(\f)\geq q_f(\f)/L$.
\end{conjecture}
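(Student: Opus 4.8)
\noindent The plan is to start from a (near-)optimal fractional witness for $q_f(\f)$ and ``round'' it to an honest sub-family $\g$, losing only a universal constant in the size parameter. Write $p=q_f(\f)$ and fix $\gl:2^V\ra\RR^+$ (nearly) realizing \eqref{wpsmall1}--\eqref{wpsmall2}. Routine reductions let me assume $\f$ is the antichain of its minimal members (this changes neither $q$ nor $q_f$), $\gl$ is supported on nonempty subsets of members of $\f$ (zeroing the other coordinates preserves \eqref{wpsmall1} and only helps \eqref{wpsmall2}; dropping the $\emptyset$-coordinate costs a factor $2$), and every member has size at most $\ell:=\ell(\f)$. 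I want $\g\sub 2^V$ with $\langle\g\rangle\supseteq\f$ and $\sum_{S\in\g}(p/L)^{|S|}\le 1/2$; note that the denominator means a set of size $k$ need only be paid for with weight $L^{-k}$, and I will try to exploit this geometric gain in $k$.

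\noindent In the abstract this is a covering-LP rounding problem, whose generic integrality gap is logarithmic in the number of constraints; that only gives $q(\f)\gtrsim q_f(\f)/\log\ell$ (about what follows from Conjecture~\ref{CT'}), so beating it must use that the constraints come from an increasing family. The first move is to reduce to a single set-size: stratifying $\gl$ by $|S|$ and writing $w_k(F)=\sum_{S\sub F,\,|S|=k}\gl_S$, one has $\sum_k w_k(F)\ge 1$, so with $\f_k:=\{F\in\f:w_k(F)>(2k^2)^{-1}\}$ (using $\sum_k(2k^2)^{-1}<1$) the family $\f$ is covered by the $\f_k$, each of which carries a fractional witness living on the single size $k$ and of controlled total weight. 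It then remains to cover each $\f_k$ and to reassemble.

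\noindent For a \emph{fixed} $k$ the task is: given $\mu$ supported on $k$-sets with $\sum_{S\sub F}\mu_S\ge 1$ for every member $F$ of a (reduced) increasing family and $(\sum_S\mu_S)\,r^k\le 1/2$, produce $\g$ with $\langle\g\rangle\supseteq\f$ and $\sum_{S\in\g}(r/L_0)^{|S|}\le 1/2$. When $k$ is large --- say $k\gtrsim\log\log\ell$ --- the plain greedy bound suffices: it rounds the fractional cover to one by $k$-sets at a cost only polynomial in $\log\ell$, and the factor $L_0^{k}$ then renders the combined contribution of those sizes negligible. The heart of the matter is small $k$, where one must use Erd\H{o}s--Ko--Rado/sunflower-type regularity of the $k$-subsets of the members of an increasing family --- the same circle of ideas (``spread'' families and the Kahn--Kalai machinery, as developed for the fractional conjecture) behind the results quoted above. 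Here $k=1$ is Talagrand's singleton theorem and $k=2$ is exactly Theorem~\ref{MT} of this paper; one would hope to reach all fixed $k$, either by induction on $k$ from the lower cases or by a single argument uniform in $k$.

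\noindent I expect the crux to be twofold. The main obstacle is the fixed-size case for general $k$ with a \emph{universal} $L_0$: the case $k=2$ already requires serious combinatorics (this paper), and there is no reason to expect the difficulty to ease for larger $k$. The secondary obstacle is the reassembly --- a cheap cover of $\f_k$ may legitimately use sets much larger or smaller than $k$ (for instance a single small set when all members of $\f_k$ share a subset), so one must bound the total contribution of these covers over the $O(\log\log\ell)$ relevant sizes without readmitting a logarithmic factor, plausibly by showing that one witness $\gl$ cannot be ``structured'' at many sizes simultaneously; this, too, seems to call for a new idea.
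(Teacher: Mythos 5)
The statement you have been asked about is Conjecture~\ref{littleTal}, which is \emph{not} proved in this paper: the paper's Theorem~\ref{MT} resolves only the special case where $\gl$ is supported on $2$-element sets (with the singleton case due to Talagrand, Proposition~\ref{tfs}, and the ``copies of $K_k$'' case due to \cite{DK}). There is therefore no proof in the paper to compare your proposal against. Your sketch correctly reflects this state of affairs: you place $k=1$ (Talagrand) and $k=2$ (Theorem~\ref{MT}) inside a stratification by support size and flag that general fixed $k$, and the reassembly across sizes, remain open.

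Two specific gaps in the sketch are worth naming. First, the stratification itself is sound (with the minor bookkeeping that passing from $\gl$ to $2k^2\,\gl^{(k)}$ makes $\f_k$ weakly $p'$-small only for $p'=p(2k^2)^{-1/k}$, a bounded loss since $(2k^2)^{1/k}=O(1)$), but the claim that ``plain greedy'' rounds the fixed-$k$ fractional cover at cost $\operatorname{poly}(\log\ell)$ is not justified. Greedy/randomized-rounding set cover pays $\log N$ with $N$ the number of constraints, which here is the number of minimal members of $\f_k$ and can be as large as $\binom{|V|}{\ell}$, so the gap depends on $|V|$ and not just $\ell$; obtaining a gap controlled by $\ell$ (or by $k$) requires exactly the ``spread''-type machinery you gesture at, not a generic covering-LP argument. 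Second, even granting a fixed-$k$ theorem with universal constant for every $k$, the reassembly over $k$ needs the individual covers $\g_k$ to use sets whose sizes are comparable to $k$, or at least to have costs summable after the substitution $p\mapsto p/L$; a cover of $\f_k$ by much smaller sets would not enjoy the geometric gain $L^{-|S|}$ you rely on, and nothing in the fractional hypothesis forces the integral cover to live at size $\approx k$. Neither of these is a fatal misstep in spirit --- you correctly identify both as obstacles --- but they are substantive, and the proposal should be read as a problem description rather than a proof.
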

\nin
(That is, weakly $p$-small implies $(p/L)$-small.)
This of course implies  
\emph{equivalence of} Conjectures~\ref{CT} and~\ref{CKK}, as well as of 
Conjecture~\ref{CT'} and the corresponding strengthening of Conjecture~\ref{CKK};
in particular, in view of~\cite{FKNP}, Conjecture \ref{littleTal} 
would now supply a proof of Conjecture~\ref{CKK}.
(Post-\cite{FKNP} this implication is probably the best motivation for
Conjecture~\ref{littleTal}, but the authors have long been interested in the conjecture
for its own sake, as it would be a striking instance of a broad, natural class of
examples where the passage from an integer problem to its fractional counterpart
has only a minor effect on behavior.)

The following mild reformulation of Conjecture~\ref{littleTal} will be convenient.

\begin{conjecture}\label{talconj'}
There is a fixed $J$ such that for any $V,p \in [0,1]$ 
and $\gl:2^V \sm \{\0\}\rightarrow \mathbb R^+$, 
\beq{jp.target}
\{U\sub V:  \sum_{S\sub U}\gl_S\geq \sum_S\gl_S(Jp)^{|S|}\}
\enq
is $p$-small.
\end{conjecture}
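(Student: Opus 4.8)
The plan is to reduce Conjecture~\ref{talconj'} to a single-size problem and then attack that by induction on the set size, in the spirit of Talagrand's treatment of the singleton case; the stumbling block will be the bookkeeping needed to keep $J$ universal. Fix $V$, $p$, and $\gl$. Rescaling $\gl$, we may assume $\sum_S\gl_S(Jp)^{|S|}=1$ (if the sum is $0$ then \eqref{jp.target} is empty), so \eqref{jp.target} becomes $\f^*:=\{U\sub V:\sum_{S\sub U}\gl_S\ge 1\}$ and we must produce $\g$ with $\langle\g\rangle\supseteq\f^*$ and $\sum_{S\in\g}p^{|S|}\le 1/2$. Splitting by cardinality, set $m_k:=\sum_{|S|=k}\gl_S(Jp)^k$, so $\sum_k m_k=1$ and $\gl$ is supported on sizes $k$ with $m_k>0$. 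If $U\in\f^*$ then $\sum_{|S|=k,\,S\sub U}\gl_S\ge m_k$ for some such $k$ --- otherwise summing over $k$ contradicts $\sum_{S\sub U}\gl_S\ge 1$ --- so $\f^*\sub\bigcup_k\f^*_k$ with $\f^*_k:=\{U:\sum_{|S|=k,\,S\sub U}\gl_S\ge m_k\}$. Since a union of covers is a cover, it suffices to cover each $\f^*_k$ by a $\g$ of weight at most $\eps_k$ with $\sum_k\eps_k\le 1/2$. Dividing $\gl$ within a size, $\nu_k(S):=\gl_S(Jp)^k/m_k$ is a probability measure on $k$-sets and $\f^*_k=\{U:\nu_k(\{S:S\sub U\})\ge(Jp)^k\}$; so it remains to show that, for a probability measure $\nu$ on $k$-sets and threshold $\tau=(Jp)^k$, the family $\{U:\nu(\{S:S\sub U\})\ge\tau\}$ is $p$-small with cover weight at most $\eps_k$.

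I would prove this by induction on $k$, the case $k=1$ being Talagrand's singleton theorem, which we may assume. For $k\ge 2$, first pass to dyadic classes so that $\nu=\nu_k$ may be taken, up to constants, uniform on a family $\A$ of $k$-sets with $|\A|(Jp)^k$ comparable to the dyadic mass in play; then $\f^*_k$ is essentially the family of $U$ containing at least $\tau|\A|$ members of $\A$. Split according to whether $\A$ is ``spread'' at scale $p$, i.e.\ whether every nonempty $Z$ lies in at most $(cp)^{|Z|}|\A|$ members of $\A$. In the spread regime the family should be coverable directly, for instance by the $\lceil\tau|\A|\rceil$-fold unions of members of $\A$: spreadness forces these unions to be large, which makes a spread-lemma-type weight estimate (cf.\ \cite{FKNP}) beat the $p^{|S|}$ factors, yielding a cover of weight at most $\eps_k$ once $J$ is large. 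In the concentrated regime, fix a popular $Z$, put $Z$ into the cover at cost $p^{|Z|}$, pass to the link family $\A_Z=\{A\sm Z:A\in\A,\ A\supseteq Z\}$ of $(k-|Z|)$-sets, and recurse with the threshold adjusted by the violated factor; since $Z$ absorbs $|Z|$ of the $k$ ``coordinates'', the recursion terminates within $k$ steps. Assembling the resulting covers over dyadic classes, sizes, and recursion branches produces $\g$.

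The main obstacle is making this accounting close with a \emph{universal} $J$, because two effects compound. First, $\f^*$ may spread its $\gl$-mass over every size $k$ up to $|V|$ (and over the dyadic classes within each size), so the budget $1/2$ is divided among many pieces; one must exploit both that the cover weight of a size-$k$ piece comes with a genuine $p^k$-type saving and that the normalization $\sum_k m_k=1$ prevents too many sizes and scales from being simultaneously heavy. Second, the link recursion within a fixed size may run for as many as $k$ levels, and the per-level losses from the spread/concentration split must not multiply out of control; in particular the interaction with the threshold adjustment at each step is delicate, since peeling off $Z$ lowers the effective scale. Controlling the combination of these phenomena uniformly over all $\gl$ is exactly the content of Conjecture~\ref{talconj'} in general. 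When $\gl$ is supported on sets of size at most $2$ --- the test case Talagrand singled out --- both effects collapse: only $k\in\{1,2\}$ occur, the recursion has depth at most $2$ (a size-$2$ instance either is handled by the spread case or drops to the size-$1$ case, which is already understood), and the spread/concentration analysis at $k=2$ can be pushed through concretely. This is the route taken below, yielding Theorem~\ref{MT}.
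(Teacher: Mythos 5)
Conjecture~\ref{talconj'} is stated in the paper as open (it is a reformulation of Talagrand's Conjecture~6.3), so there is no proof of it in the paper to compare against; what the paper proves is the special case $\supp(\gl)\subseteq\binom{V}{2}$ (Theorem~\ref{MT}), and your plan for that case is not ``the route taken below'' as you assert. For $k=2$ you would dyadic-uniformize and then split the resulting graph $G$ according to whether it is $p$-spread, covering the spread regime by $\lceil\tau|\A|\rceil$-fold unions of edges and the concentrated regime by peeling a popular vertex and recursing on its link. The paper does not split by spreadness within a class. It peels off the ``concentrated'' $U$'s once, globally, via Proposition~\ref{tfs}, and then covers each dyadic class $G_i$ by \emph{star forests} $\g(b,L)$ found through greedy extraction of the largest good star in $U$ (Claim~\ref{Claim1}), with $(b,L)$ ranging over the explicit list $\cee$. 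The cost analysis (Claim~\ref{Claim2}) is phrased in terms of $\CSS_J(\mu,T)$ from \eqref{CJ*}--\eqref{U}, which carries the extra hypothesis $|G[U]|\geq J|D_G(U)|p$. That hypothesis is not decoration: the paper explains right after \eqref{U}, via the disjoint-$K_{1,m}$ example, that without it no cover has cost decreasing in $T$ at fixed $T/\mu$, and it is exactly that decrease which lets the contributions of the many pairs $(\ga,\gb)$ in \eqref{jp.goal} sum to $O(\JJ^{-2})$.

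Your sketch names both difficulties but supplies no mechanism for either. There is no analogue of the $D_G$ condition, so a dyadic class in the concentrated regime (think: many disjoint stars) can cost $\Theta(1/J)$ to cover, and the sum over classes is uncontrolled. And the spread-case claim --- that spreadness forces $t$-fold unions of edges to span roughly $2t$ vertices --- is false as stated (a long cycle is spread, yet $t$ consecutive edges span only $t+1$ vertices), so the spread-lemma estimate you gesture at still needs an argument. The reduction by size and dyadic scale is fine, and the spirit of peeling concentrated parts before covering the rest is shared with the paper; but on the point where the work actually happens --- making the per-class cost shrink fast enough in the row index to sum --- your proposal is silent, and the paper's answer (star-forest covers controlled through $\CSS_J(\mu,T)$ with the $D_G$ hypothesis) is substantially different from, and not implied by, what you outline.
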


As Talagrand observes, 
even simple instances of Conjecture~\ref{littleTal} 
are not easy to establish.
He suggests two test cases, which in the
formulation of Conjecture~\ref{talconj'} become:

\mn
(i)  $V=\C{[n]}{2}=E(K_n)$ and (for some $k$)
$\gl$ is the indicator of
$\{\mbox{copies of $K_k$ in $K_n$}\}$;

\mn
(ii)  $\gl$ is supported on 2-element sets.

\mn
(He does prove Conjecture~\ref{talconj'}
for $\gl$ supported on singletons; 
see Proposition~\ref{tfs} for a quantified version that will be useful in what follows.)

The quite specific (i) above was treated in~\cite{DK}.  
Here we dispose of the much broader (ii):
\begin{theorem}\label{MT}
Conjecture~\ref{talconj'} holds when $\supp(\gl)\sub \C{V}{2}$; 
in other words, there is a $J$ such that for any graph $G=(V,E)$, $p\in [0,1]$ and 
$\gl: E\ra \RR^+$, 
\beq{MTdisplay}
\{U\sub V:\gl(G[U])\geq J^2\gl(G)p^2\}
\enq
is $p$-small (where $G[U]$ is the subgraph induced by $U$).
\end{theorem}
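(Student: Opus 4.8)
The plan is to reduce to the case where $\gl$ is (up to scaling) the indicator of an edge set, i.e.\ where $G$ itself is the object and we are asking: if $G$ has many edges in some sense, then $\{U: e(G[U])\geq J^2 e(G)p^2\}$ is $p$-small. First I would dispose of weights by a dyadic bucketing argument: partition $E$ according to the size of $\gl_e$ into $O(\log(1/p))$ (or suitably many) classes on which $\gl$ is constant up to a factor of $2$; losing a constant (or a $\log$) factor in $J$, it suffices to handle each class separately, so WLOG $\gl = w\cdot\one_{E'}$ for a single weight $w$ and edge set $E'=E(G)$. One must be slightly careful that the number of buckets that matter is controlled — here the key point, exactly as in Talagrand's singleton case (Proposition~\ref{tfs}), is that a bucket contributes to \eqref{MTdisplay} only if its own edge count, times $w$, is a non-negligible fraction of $\gl(G)$, so only $O(1/\eps)$-many buckets (for a suitable $\eps$) can be individually responsible, and the rest can be absorbed.

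Next, for the core problem — $G=(V,E)$ a graph, and we want a small cover $\g$ of $\f := \{U : e(G[U])\geq c|E|p^2\}$ — the natural move is to exploit the structure of graphs with many edges via the Kruskal--Katona / Lovász form of the shadow inequality, or more simply via a decomposition of $G$ into pieces of controlled maximum degree. Concretely, I would split $E$ into a ``high-degree'' part $H$, consisting of edges incident to the $O(|E|p^2/\ldots)$ vertices of largest degree, and a ``low-degree'' part $L$ where every vertex has degree $\le d$ for an appropriate threshold $d$. For the low-degree part, $e(G[U]) \le d|U|/2$, so $U\in\f$ forces $|U|\gtrsim c|E|p^2/d$; the cover $\g$ is then built from the singletons / small sets witnessing that $U$ must contain $\gtrsim c|E|p^2/d$ vertices, and one checks $\sum_{S\in\g}p^{|S|}\le 1/2$ by choosing $d$ so that $|E|/d$ (times the per-vertex cost $p$) is small — this is where Talagrand's singleton result gets invoked essentially verbatim on the vertex set, weighted by degree. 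For the high-degree part, the number of heavy vertices is small, so one covers by pairs (or single vertices) from among those, and the generating-function bound $\sum p^{|S|}$ is controlled because there are few such sets.

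The step I expect to be the main obstacle is getting the two regimes to meet with a genuinely \emph{universal} constant $L$ (equivalently $J$), rather than one degrading with $|V|$ or with the number of distinct edge-weights. In the low-degree regime the cover is naturally made of small sets (pairs, or $r$-sets for bounded $r$), and the $p$-smallness sum is a sum like $\binom{|V|}{r}p^r$-type quantities that one needs to beat using $e(G)\le \binom{|V|}{2}$ and the defining inequality $e(G[U])\ge J^2 e(G)p^2$ — squeezing a clean constant out of this balancing act, uniformly, is the delicate part, and is presumably where the square ($p^2$, $J^2$) in the statement is genuinely used: it gives exactly the room to pay ``$p$ per endpoint'' twice. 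I would expect the argument to go through an intermediate claim of the shape: \emph{any} graph $G$ with $e(G)\ge m$ has a set $\g$ of at most $2$-element vertex subsets with $\langle\g\rangle \supseteq \{U: e(G[U])\ge \eps m\}$ and $\sum_{S\in\g}p^{|S|}\le 1/2$ whenever $m p^2 \ge C/\eps^2$ (or similar), proved by the high/low degree split above, and then Theorem~\ref{MT} follows by applying this to each dyadic weight class and summing a geometric series.
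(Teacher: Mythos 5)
There are two serious gaps here, one in the unweighted core and one in the weighted-to-unweighted reduction; both point toward the main ideas of the paper's proof that your sketch is missing.

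\textbf{The proposed intermediate claim is false.} You suggest that for any $G$ with $e(G)\ge m$, the family $\{U: e(G[U])\ge \eps m\}$ admits a cover $\g$ by sets of size $\le 2$ with $\sum_{S\in\g}p^{|S|}\le 1/2$ whenever $mp^2 \ge C/\eps^2$. Take $G$ to be a perfect matching on $2m$ vertices, so $e(G)=m$. A set $U$ belongs to the family iff $U$ contains at least $\eps m$ complete matching edges. Now consider any cover $\g$ by singletons and pairs. Delete from $V$ the vertices of the singletons of $\g$ and the matching edges that appear as pairs of $\g$ (pairs that are not matching edges are useless here). If what remains contains at least $\eps m$ matching edges, the union of $\eps m$ of them is in the family but avoids $\g$. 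So $\g$ must essentially kill $(1-\eps)m$ matching edges, giving cost at least about $(1-\eps)mp^2 \ge (1-\eps)C/\eps^2$, which is large, not $\le 1/2$. The family \emph{is} $p$-small here, but only via a cover by sets of size $\sim 2\eps m$ (unions of $\eps m$ matching edges), whose cost $\binom{m}{\eps m}p^{2\eps m}$ is tiny. This is precisely why the cover constructed in the paper consists of disjoint unions of stars (``star forests''), with sizes growing like $\sqrt T$, rather than bounded-size sets; and the greedy star-peeling argument (Claim~\ref{Claim1}) is the mechanism by which one shows every heavy $U$ contains such a star forest. The high/low degree split is a reasonable instinct and is morally present in the peeling, but it has to feed into covers by these large structured objects, not pairs.

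\textbf{The bucketing bookkeeping does not close.} You argue that ``only $O(1/\eps)$ buckets matter,'' but one has to sum the covering costs of all the buckets, and the buckets one would like to discard cannot simply be thrown away from $\langle\g\rangle$: a $U$ that is heavy overall may owe its weight to a single bucket that is not among the few heaviest (and the per-bucket total weights $\tw_i$ can decay as slowly as $1/i$, so the ``tail'' buckets need not be jointly negligible). What actually makes the sum converge in the paper is two-fold. First, before bucketing, the $U$'s with large weighted degree sum $\gl(D(U))$ are split off via Proposition~\ref{tfs}; this leaves a side condition (the $|G[U]|\ge J|D_G(U)|p$ clause built into the definition \eqref{CJ*}) that is essential to getting covering-cost bounds that decay exponentially in $\sqrt T$ \emph{even at fixed ratio $T/\mu$}. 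As the remark after \eqref{CJ*} explains, without that side condition the cost genuinely cannot decay with $T$ at fixed $T/\mu$ (disjoint copies of $K_{1,m}$ give a counterexample). Second, the buckets are arranged in a two-dimensional array so that the per-bucket targets $T_{\al,\gb}$ and hence the cost bounds form a geometrically convergent double series. A constant-per-bucket cost summed over $\Theta(\log(1/p))$ buckets, as your sketch implicitly suggests, would give a $J$ depending on $p$, not a universal constant; you need the decay in $T$, and to obtain it you need the $D_G$-type normalization that your plan does not set up.

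Your first paragraph (dyadic reduction to $\gl$ constant-up-to-a-factor) is fine and corresponds to the reduction to \eqref{thetas} in the paper, but the subsequent steps need the two ingredients above.
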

\nin
(We could of course take $G=K_n$, but find thinking of a general $G$ more natural.)

It seems not impossible
that the ideas underlying Theorem~\ref{MT} can 
be extended to give Conjecture~\ref{littleTal} in full, but we don't yet see this.

The rest of the paper is devoted to the proof of Theorem~\ref{MT}.
The most important part of this turns out to be a version of the 
``unweighted'' case (that is, with $\gl$ taking values in $\{0,1\}$), though 
deriving Theorem~\ref{MT} from this still needs some ideas;
a precise statement (Theorem~\ref{Tr2}) is given 
in Section~\ref{Framework}, 
following a few preliminaries.
Section~\ref{Weighted} then proves Theorem~\ref{MT} assuming Theorem~\ref{Tr2},
and the proof of Theorem~\ref{Tr2} itself is given in 
Section~\ref{Unweighted}.

\section{Orientation}\label{Framework}

We use $[n] $ for $ \{1, 2, \ldots, n\}$, $2^X$ for the power set of $X$, 
and $\binom{X}{r}$ for the family of $r$-element subsets of $X$,
and recall from above that $\langle \A\rangle$ is the increasing family 
generated by $\A\sub 2^X$.
For a set $X$ and $p\in [0,1]$, $X_p$ is the ``$p$-random'' subset of $X$
in which each $x\in X$ appears with probability $p$ independent of other choices.
We assume throughout that $p$ has been specified and often omit it from our notation.

For $\A\sub 2^V$, the \emph{cost} of $\A$ (w.r.t.\ our given $p$) is 
$\CSSS(\A)=\sum_{S\in \A}p^{|S|}$.  
We say $\A$ \emph{covers} $\B\sub 2^V$ if 
$\langle \A\rangle \supseteq \B$, set 
\[
\CSS(\B) = \min \{\CSSS(\A): \mbox{$\A$ covers $\B$}\},
\]
and say $\B$ can be \emph{covered at cost $\gc$}
if $\CSS(\B)\leq \gc$.  
So ``$\cB$ $p$-small'' is the same as $\CSS(\cB) \le 1/2$,
and each of Conjecture~\ref{talconj'}, Theorem~\ref{MT} says (roughly) that the collection 
of subsets $U$ of $V$ for which $\sum_{S\sub U}\gl_S$ is much larger than the ``natural'' value,
$\E [\sum_{S\sub V_p} \gl_S]=\sum\gl_Sp^{|S|}$, admits such a ``cheap'' cover.
Talagrand's proof for singletons, to which we turn next, provides a first, simple 
illustration of this, and what we do in the rest of the paper \emph{amounts to} 
producing such a cover for the collection in \eqref{MTdisplay}.

\mn

\nin \textit{Singletons.}
In the above language, Talagrand's result for
$\gl$ supported on singletons becomes:

\begin{prop}\label{tfs}
For all $\gz:V\ra \RR^+$ and $J>2e$, 
\beq{tfscost}
\CSS(\{U\sub V: \gz(U)\geq J\gz(V)p\}) < 2e/(J-2e).
\enq
\end{prop}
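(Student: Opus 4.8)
The goal is to exhibit, for $\B:=\{U\sub V:\gz(U)\ge J\gz(V)p\}$, a cover $\g$ (i.e.\ $\langle\g\rangle\supseteq\B$) with $\CSSS(\g)<2e/(J-2e)$; since $\CSS(\B)\le\CSSS(\g)$ this gives \eqref{tfscost}. Scaling, assume $\gz(V)=1$, so $\gz$ is a probability measure on $V$; write $\tau:=Jp$ and $w_v:=\gz(\{v\})$, so we must cover $\{U:\gz(U)\ge\tau\}$ at cost $<\sum_{k\ge1}(2e/J)^k=2e/(J-2e)$. This last identity is the clue to the structure: organize $\g=\bigcup_{k\ge1}\g_k$ by cardinality and aim for $\CSSS(\g_k)=p^k|\g_k|\le(2e/J)^k$, i.e.\ $|\g_k|\le(2e/\tau)^k$.

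For the covering part I would use a greedy ``prefix'' witness. Given $U$ with $\gz(U)\ge\tau$, either some $v\in U$ has $w_v\ge\tau/2$ (and then $\{v\}$ will serve), or else order the elements of $U$ by nonincreasing weight and let $S(U)$ be the shortest initial segment of total weight $\ge\tau/2$. Since the segment one shorter has weight $<\tau/2$, deleting \emph{any} element of $S(U)$ drops its weight below $\tau/2$, so $S(U)$ is a minimal (under inclusion) member of $\{S:\gz(S)\ge\tau/2\}$ contained in $U$. Writing the weights of $S(U)$ as $b_1\ge\dots\ge b_k$, this minimality gives $b_1+\dots+b_{k-1}<\tau/2$, hence $\gz(S(U))<\tau$ and $ib_i\le b_1+\dots+b_i\le\gz(S(U))<\tau$, so $b_i<\tau/i$ for every $i$, while $\gz(S(U))\in[\tau/2,\tau)$. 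Thus $\g$ should consist of certain minimal $(\tau/2)$-heavy sets together with the singletons of weight $\ge\tau/2$; the latter cost at most $p/(\tau/2)=2/J$ and fold into the $k=1$ term.

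The cost bound should come from the structural facts just noted: a minimal $(\tau/2)$-heavy set $S$ with $|S|=k$ cannot consist of uniformly tiny elements, because $\gz(S)\ge\tau/2$ together with $b_i<\tau/i$ forces $S$ to contain $\sim k$ elements of weight $\gtrsim\tau/k$; since $V$ has at most $\sim k/\tau$ such elements in all, one gets $|\g_k|\le\binom{\lfloor 2k/\tau\rfloor}{k}\le(2e/\tau)^k$ via $\binom{n}{k}\le(en/k)^k$ --- and this is exactly where the constant $2e$, hence the precise bound $2e/(J-2e)$, enters. Summing $p^k|\g_k|\le(2e/J)^k$ over $k\ge1$ then finishes the estimate.

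I expect the middle step --- pinning down exactly which sets go into $\g_k$ --- to be the real obstacle. The naive choice ``all minimal $(\tau/2)$-heavy sets'' genuinely violates the cost bound: a $\gz$ concentrated on one medium-weight class and one very light class produces enormously many minimal heavy sets formed from a small medium core plus light ``filler.'' The obvious repair ``keep only balanced witnesses, with every element of weight $\ge\tau/(2k)$'' is cheap but fails to cover: if the sorted weights decay like $\gtrsim c/k$ then $\gz(V)$ can exceed $\tau$ while no heavy set has all its elements comparably large. So $\g$ must be built more carefully --- processing $V$ in decreasing order of weight and selecting, at each size $k$, a suitably pruned family of prefix-witnesses --- so that coverage of \emph{every} $U$ with $\gz(U)\ge\tau$ and the sharp counting bound $|\g_k|\le(2e/\tau)^k$ hold simultaneously; this reconciliation is the heart of the argument.
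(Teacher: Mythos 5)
Your proposal is incomplete by your own admission: you correctly identify the tension between coverage and the counting bound $|\g_k|\le(2e/\tau)^k$, but you leave ``this reconciliation'' as an unresolved obstacle rather than resolving it. The missing idea is that the cover need not consist of $(\tau/2)$-heavy sets at all, minimal or otherwise; insisting that cover elements themselves certify heaviness is exactly what makes the construction look delicate. Instead, sort $V=[n]$ so that $\gz$ is non-increasing (and WLOG positive), set $a=\lceil 1/(Jp)\rceil$, and take $\g_k=\binom{[ak]}{k}$, i.e.\ \emph{all} $k$-subsets of the $ak$ heaviest vertices --- most of which are not remotely heavy. Then $\CSSS(\g_k)=\binom{ak}{k}p^k\le(eap)^k\le(2e/J)^k$ (using $ap\le 2/J$ when $Jp\le 1$; the case $Jp>1$ is vacuous), so the total cost is less than $\sum_{k\ge1}(2e/J)^k=2e/(J-2e)$, exactly the geometric sum you were aiming for. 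Coverage is a one-line pigeonhole: if $U=\{u_1<\cdots<u_\ell\}$ has $\gz(U)\ge J\gz(V)p$ but contains no member of any $\g_k$, then $|U\cap[ak]|<k$ for every $k$, i.e.\ $u_i>ia$ for every $i$; monotonicity then gives $\gz(u_i)\le\frac1a\sum_{j=1}^{a}\gz\bigl((i-1)a+j\bigr)$ and hence $a\gz(U)\le\gz\bigl([\ell a]\bigr)<\gz(V)$, contradicting $a\gz(U)\ge aJp\,\gz(V)\ge\gz(V)$. Your prefix-witness observations (sorted weights satisfying $b_i<\tau/i$, minimality of the shortest $(\tau/2)$-heavy prefix, and so on) are all correct but lead you away from this: the ``suitably pruned family of prefix-witnesses'' you were searching for is simply $\binom{[ak]}{k}$, once you stop requiring the cover elements to be heavy themselves and realize the point is only that a heavy $U$ must, for some $k$, put at least $k$ of its elements among the top $ak$.
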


\nin 
(The dependence on $J$ is best possible up to constants; e.g. take $|V|=J$, $p=J^{-2}$ 
and $\zeta \equiv 1$.  The switch from $\gl$ to $\gz$ will be convenient when we 
come to use the proposition; see \eqref{use2.1}.)

\begin{proof}
We may take $V=[n]$ and assume $\gz$ is non-increasing (and positive) and $Jp\le 1$ (since the 
statement is trivial when $Jp> 1$). Define $R$ by \[
\frac{1}{Rp}=\left\lceil\frac{1}{Jp}\right\rceil=: a.
\]

We claim that the collection
\[
\A=\bigcup_{k\ge 1} \binom{[ak]}{k}
\] covers the family in \eqref{tfscost}; this gives the proposition since the l.h.s. of \eqref{tfscost} is then at most \[
\CSSS(\A)=\sum_{k\ge1} \binom{ak}{k}p^k < \sum_{k\ge1} \left(\frac{e}{R}\right)^k< \frac{e}{R-e}< \frac{2e}{J-2e}
\] (the last inequality holding since $Jp\le 1$ implies $R>J/2$.)

To see that the claim holds, observe that its failure implies the existence of some $U=\{u_1<u_2<\cdots<u_\ell\}\subseteq [n]$ with $\gz(U)\ge J\gz(V)p$ such that $\abs{U\cap [ak]}< k$ for all $k>0$. But then $u_i>ia$ for all $i\in[\ell]$, yielding the contradiction \[\gz(V)>\sum_{i=0}^{\ell-1}\sum_{j\in [a]} \gz(j+ia) \ge a\gz(U)\ge \gz(V).\qedhere\]
\end{proof}

\bigskip

\nin \textit{Toward doubletons.} 
Graphs here are always simple and are mainly thought of as sets of edges;
thus $|G|$ is $|E(G)|$.
We use
$\nabla_G(v)$ or $\nabla_v$ for $\{e\in E(G):v\in e\}$; so the degree of 
$v$ is $d_v= |\nabla_v|$.
(We also use $N_G(v)$ for the neighborhood of $v$ in $G$.)

\nin 
The following convention will be helpful.
Given a graph $G$ on $V$, we associate with each $U\sub V$ a ``weighted subset'' 
$D(U)=D_G(U)$
of $E(G)$ that assigns to each $e$ the weight $|e\cap U|/2$.  
(We also use $D_v$ or $D_G(v)$ for $D(\{v\})$.)
We then have (or define), for any $\gl:G\ra\RR^+$,
\[
\mbox{$\gl(D(U)) =\frac{1}{2}\sum_{v\in U}\gl(\nabla_v)$}
\]
(e.g.\ $|D(U)| =\frac{1}{2}\sum_{v\in U}d_v$).  
Notice that 
\[
\E \gl(G[V_p]) = \E \gl(D(V_p))p
\]
(e.g.\ $\E |G[V_p]| = \E |D(V_p)|p$), so $\gl(D(U))p$ is a natural benchmark against which
to measure $\gl(G[U])$.

As mentioned at the end of Section~\ref{Intro}, the heart of our argument 
deals with the unweighted case of Theorem~\ref{MT}, where we are given some 
(simple) graph $G$ on $V$, and the collection in \eqref{MTdisplay} becomes the set of
$U$'s for which $G[U]$ is atypically large.  It is here that we are concerned with the
production of covers, which are then available for use in the weighted case.

For the derivation of Theorem~\ref{MT}, we will
decompose $G$ into subgraphs $G_1, G_2, \ldots$ so that the $\gl$-values of 
the edges within a $G_i$ are roughly equal, and show that for each ``heavy'' $U$
(meaning one with large $\gl(G[U])$, as in \eqref{MTdisplay}), there is some $i$ for 
which $G_i[U]$ is large.
We then plan to appeal to the unweighted case to cover,
for each $i$, the $U$'s that are ``heavy'' for $G_i$---a plan made delicate by the need to sum 
the contributions of many $G_i$'s to the l.h.s.\ of \eqref{psmall2}.

To deal with this we need a little more than the unweighted version of 
Theorem~\ref{MT}, as follows. 
Define 
\beq{CJ*}
\CSS_J(\mu,T)
\enq
to be the infimum of those $\gc$'s for which, for every
$p$ and (simple) graph $G$ (on $V$) with $|G|p^2\leq \mu$,
\beq{U}
\{U\sub V: |G[U]|\geq \max\{T, J|D_G(U)|p\}\}
\enq
can be covered at cost $\gc$.

The technical-looking requirement involving $D_G$ is a crucial feature of our argument:
for derivation of Theorem~\ref{MT}, we will
need cost bounds that improve as the ``target'' $T$ grows, even if $T/\mu$ does not,
which need not be the case without this extra condition
(e.g.\ it's not hard to see that if $G$ is the union of $(Kp)^{-1}$
disjoint copies of $K_{1,m}$,
and $T=mp=K\mu$, then, no matter how large $m$ is,
$\{U\sub V: |G[U]|\geq T\}$---or the smaller $\{U\sub V: G[U]\cong K_{1,mp}\}$---cannot 
be covered at cost less than $1/K$).
License to use 
the condition will be provided by the reduction 
to unweighted in Section~\ref{Weighted}.

Our central result is:

\begin{theorem}\label{Tr2}
For any $\mu$ and $T= cJ^2\mu$ with
\beq{Jc}
\mbox{$c\ge 256e/J$, $\,\,J\geq 8e$}
\enq
and $J_1=J/(8e)$,
\beq{Tr2cost}
\CSS_J(\mu,T) \leq 32 c^{-1}\min\{J_1^{-2},J_1^{-\sqrt{T}/16}\}.
\enq
\end{theorem}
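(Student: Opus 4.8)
The plan is to produce, for a given (simple) graph $G$ on $V$ with $|G|p^2 \le \mu$, an explicit cover of the family in \eqref{U} and bound its cost by the right‐hand side of \eqref{Tr2cost}. Fix the target set $\h = \{U : |G[U]| \ge \max\{T, J|D_G(U)|p\}\}$. The first observation is that for any $U \in \h$ we have $|G[U]| \ge J|D_G(U)|p = \tfrac{J}{2} p\sum_{v\in U} d_v$, so the average degree inside $G[U]$ is at most $2|G[U]|/|U| \le$ (something controlled), while \emph{globally} $\sum_v d_v = 2|G| \le 2\mu/p^2$. The key structural point is a \emph{threshold/degree dichotomy}: each $v \in U$ either has small degree in $G$, in which case it is cheap to ``pay for'' $v$ individually, or has large degree, in which case the condition $|G[U]| \ge J|D_G(U)|p$ forces $|U|$ to be large relative to $|G[U]|$, which in turn (since $G[U]$ has many edges, at least $T$) lets us find a cheap cover by small edge‐sets. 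I would formalize this by splitting $V = V_{\text{low}} \cup V_{\text{high}}$ at a degree threshold $d^* \asymp (Jp)^{-1}\sqrt{T}$ (chosen so the arithmetic in \eqref{Tr2cost} works out), and then building the cover in two parts.

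\textbf{Step 1 (covering the high‐degree contribution).} Restrict attention to $G' = G[V_{\text{high}}]$, and to those $U$ with $|G'[U]|$ still $\gtrsim T$. Here one wants to appeal to Proposition~\ref{tfs} in its $D_G$‐weighted incarnation hinted at by \eqref{use2.1}: take $\gz = $ (a suitable multiple of) the degree function, so that $\gz(U) = 2|D_G(U)|$ is forced to be large (at least $\gtrsim |G[U]|/(Jp) \ge T/(Jp)$) for every heavy $U$, and Proposition~\ref{tfs} then covers such $U$ at cost $O(J_1^{-1})$ per ``scale.'' But a single application only gives the $J_1^{-2}$ type bound; to get the improvement to $J_1^{-\sqrt T/16}$ I would iterate, or rather choose the covering family $\A = \bigcup_{k \ge k_0}\binom{[a k]}{k}$ with the summation \emph{starting} at $k_0 \asymp \sqrt T$ rather than at $k=1$ (the point being that a heavy $U$ cannot be too small — $|U| \ge$ some function of $\sqrt T$ — because $G[U]$ has $\ge T$ edges, so the small‐$k$ terms of the cover are simply unnecessary). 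Summing the geometric tail from $k_0$ yields the $\min\{J_1^{-2}, J_1^{-\sqrt T/16}\}$ factor; the $32c^{-1}$ prefactor comes from tracking the $|G|p^2 \le \mu$ normalization through $\gz(V)p \le$ (something like) $2\mu/(Jp^{} \cdot \text{stuff})$.

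\textbf{Step 2 (covering the low‐degree contribution, and combining).} For vertices of degree $< d^*$, the edge set $\nabla_v$ is small, so a single such $v$ (together with $\nabla_v$) forms a cheap generator; more precisely, $\{v\}$ itself is useless (it's a $1$‐element set of cost $p$, and there are $|V|$ of them), so instead I would cover by edge‐sets: any $U$ that is heavy \emph{and} whose edges all lie in the low‐degree part must contain $\gtrsim T$ edges spanning $\lesssim \sqrt{T} \cdot (\text{bounded})$ low‐degree vertices, and such edge‐sets can again be enumerated by a $\binom{[ak]}{k}$‐style family at cost $O(c^{-1}J_1^{-\sqrt T/16})$. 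Finally, for a general $U \in \h$, write $U = (U \cap V_{\text{low}}) \cup (U \cap V_{\text{high}})$ and use that $|G[U]| \le |G[U\cap V_{\text{low}}]| + |G[U\cap V_{\text{high}}]| + (\text{cross edges, bounded via low degrees})$, so one of the two parts is still $\gtrsim T$; the cover of $\h$ is then the ``product'' of the two covers from Steps 1 and 2, and its cost is at most the sum, giving \eqref{Tr2cost} with the stated constants.

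\textbf{Main obstacle.} I expect the delicate point to be Step 1's improvement from $J_1^{-2}$ to $J_1^{-\sqrt T/16}$: making rigorous that ``heavy'' forces $|U|$ (not just $\gz(U)$) to be large enough to kill the low‐$k$ terms of the covering family requires using the $|G[U]| \ge T$ constraint together with a bound on the maximum degree \emph{inside} $G[U]$ — and that max‐degree bound is exactly what the $J|D_G(U)|p$ condition in \eqref{U} buys us, so the interplay of the two constraints defining $\h$ has to be handled carefully. A secondary nuisance is bookkeeping the constants ($32$, $256e/J$, $J/(8e)$, $\sqrt T/16$) so that the geometric‐series tails and the $|G|p^2\le\mu$ normalization combine cleanly; I would set up the degree threshold $d^*$ and the cut‐off $k_0$ symbolically first and only pin down numerics at the end.
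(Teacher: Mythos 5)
Your proposal takes a genuinely different route from the paper — a degree split of $V$ combined with a modified version of the singleton cover from Proposition~\ref{tfs} — whereas the paper builds the cover out of \emph{star forests} (disjoint unions of $b$ stars of a prescribed per-vertex size $L^v=\max\{L,\lceil Jd_vp/4\rceil\}$, over a small collection of pairs $(b_i,L_i)$ with $L_i=2^{i-1}$), and establishes coverage by a greedy peeling argument that repeatedly removes the largest ``good'' star from $U$ and charges the edges of $G[U]$ to the peeled stars. Unfortunately I think your route has several gaps that are not merely bookkeeping.

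First, a sign error that looks fatal to Step~1: you assert that $\gz(U)=2|D_G(U)|$ is forced to be large, ``at least $\gtrsim |G[U]|/(Jp)$.'' But the second condition defining $\U$ in \eqref{U} is $|G[U]|\ge J|D_G(U)|p$, i.e.\ $|D_G(U)|\le |G[U]|/(Jp)$ — an \emph{upper} bound on $|D_G(U)|$. (The trivial lower bound $|D_G(U)|\ge |G[U]|$ holds, but that is not the estimate you use, and it does not feed Proposition~\ref{tfs} the way your argument needs.) Second, the attempt to lower-bound $|U|$ so as to ``kill the low-$k$ terms'' does not close: from $|G[U]|\le \tfrac12|U|\max_{v\in U}d_v$ and $\max_{v\in U}d_v\le 2|G[U]|/(Jp)$ you get only $|U|\ge Jp$, because $|G[U]|$ has no \emph{upper} bound in terms of $T$; so ``$|U|\gtrsim\sqrt T$'' is not available. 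Third, and most structurally, the prefix cover $\bigcup_k\binom{[ak]}{k}$ is tailored to \emph{linear} vertex weights: it covers $U$ exactly when $U$ meets a degree-ordered prefix in $\ge k$ elements, but $|G[U]|$ is quadratic in $U$ and a heavy $U$ has no reason to concentrate on the globally high-degree vertices of $G$ (or of $G'=G[V_{\text{high}}]$); a dense $U$ can consist entirely of vertices whose $G$-degree is modest. This is exactly the obstruction that the paper's star-forest generators are designed to overcome: a star records a vertex together with a specific chunk of its edges, so the cover ``sees'' edge-density of $G[U]$ directly rather than through a vertex weight. Finally, Step~2 and the combination step are asserted rather than argued — the ``one of the two parts is still $\gtrsim T$'' claim needs a bound on cross edges that you have not supplied, and the ``product'' of two covers is not a cover of the original family at additive cost without more care. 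I would recommend abandoning the reduction to Proposition~\ref{tfs} and instead looking for generators that encode local edge structure, as the paper does.
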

\nin
(Here and throughout we don't worry about getting good constants,
trying instead to keep the argument fairly clean.)
As already mentioned, the proof of Theorem~\ref{Tr2} is given in Section~\ref{Unweighted},
following the derivation of Theorem~\ref{MT}, to which we now turn.

\section{proof of Theorem~\ref{MT}}\label{Weighted}

Here we assume Theorem~\ref{Tr2} and
prove the following quantified version of Theorem~\ref{MT}.
\begin{theorem}\label{MT'}
For any graph G on V, $\gl:G\ra \RR^+$ and 
\beq{JJlb}
\JJ\geq 4096\sqrt{2} e,
\enq
the set 
\[
\U_0=
\{U \sub V:\gl(G[U]) \ge \JJ^2\gl(G)p^2\}
\]
can be covered at cost $O(1/\JJ)$.
\end{theorem}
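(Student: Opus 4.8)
The plan is to reduce the weighted statement to the unweighted estimate of Theorem~\ref{Tr2} by dyadically decomposing $G$ according to the size of the $\gl$-values. Normalize so that $\gl(G)p^2 = 1$ (after discarding edges of $\gl$-weight too small to matter, whose total contribution to $\gl(G[U])$ is at most a small fraction of $\JJ^2$). For $i\ge 0$ let $G_i = \{e\in G: \gl_e \in (2^{-i-1},2^{-i}]\cdot\text{(max weight)}\}$, so that on each $G_i$ the weights are within a factor $2$ of a common value $w_i$, and $\gl(G_i)\le w_i|G_i|\le 2\gl(G_i)$. For a ``heavy'' $U$ (one with $\gl(G[U])\ge \JJ^2\gl(G)p^2 = \JJ^2$), since $\sum_i \gl(G_i[U]) = \gl(G[U]) \ge \JJ^2$ and the tails $\sum_{i\ge i_0}$ are geometrically controlled by $\gl(G)$, there must be some $i$ with $\gl(G_i[U])$ at least roughly $\JJ^2 2^{-i}\cdot(\text{something summable})$; converting back to edge counts, $|G_i[U]| \gtrsim \JJ^2 w_i^{-1}\cdot(\ldots)$, i.e.\ $|G_i[U]|$ exceeds a target $T_i$ that is a definite multiple of $J_i^2\mu_i$ where $\mu_i := |G_i|p^2$ and $J_i$ is a rescaled constant. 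The point of setting things up this way is that $\mu_i = |G_i|p^2 \le 2w_i^{-1}\gl(G_i)p^2 = 2 w_i^{-1}\gl(G_i)$, and $\sum_i w_i|G_i| \asymp \sum_i\gl(G_i) = \gl(G) = p^{-2}$, so the $\mu_i$ are summable in a weighted sense: $\sum_i w_i\mu_i \lesssim 1$.

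Next I would invoke Theorem~\ref{Tr2} for each $i$: the set of $U$ with $|G_i[U]|\ge\max\{T_i,\, J|D_{G_i}(U)|p\}$ can be covered at cost $\CSS_J(\mu_i,T_i) \le 32 c^{-1}\min\{J_1^{-2}, J_1^{-\sqrt{T_i}/16}\}$. Two things must be arranged. First, the $D_{G_i}$ side condition $|G_i[U]| \ge J|D_{G_i}(U)|p$ has to be \emph{free}: for heavy $U$ we need $\gl(G_i[U])$ large relative to $\gl(D_{G_i}(U))p$, and this is exactly where the extra hypothesis in \eqref{U} earns its keep — one shows that if $|G_i[U]| < J|D_{G_i}(U)|p$ for \emph{all} relevant $i$ then, summing over $i$ with the weights $w_i$, $\gl(G[U]) < J\gl(D_G(U))p$, but $\gl(D_G(U))p \le \gl(G)$-order quantities force $\gl(G[U])$ to be $O(\JJ)\cdot\gl(G)p^2$, not $\JJ^2$, a contradiction once $\JJ$ is large. (This mirrors the benchmark discussion around $\E\gl(G[V_p]) = \E\gl(D(V_p))p$ in Section~\ref{Framework}.) Second, I must sum the covering costs over $i$: the total is $\sum_i 32c^{-1}\min\{J_1^{-2},J_1^{-\sqrt{T_i}/16}\}$, and since $T_i$ grows like $\JJ^2 2^{-i}$ for small $i$ and the bound decays doubly in $\sqrt{T_i}$, while for large $i$ the prefactor $c^{-1}\asymp J/T_i \asymp (\ldots)$ together with the weights $w_i$ (which decay like $2^{-i}$) makes the tail a convergent geometric-type series, the whole sum is $O(1/\JJ)$ provided $\JJ\ge 4096\sqrt2\, e$ is large enough to absorb the constants and meet the hypotheses \eqref{Jc} for each $i$ (i.e.\ $c_i \ge 256e/J$, $J\ge 8e$ with $J$ the per-block constant derived from $\JJ$).

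The main obstacle I expect is the bookkeeping in the dyadic pigeonhole: choosing the thresholds $T_i$ so that (a) every heavy $U$ lands in \emph{some} block's bad set, (b) each $T_i = c_i J^2\mu_i$ with $c_i$ meeting \eqref{Jc}, and (c) the resulting costs, reweighted by the geometric factors coming from $\sum_i w_i\mu_i\lesssim 1$, still sum to $O(1/\JJ)$. The tension is that making $T_i$ large (good for the $\min\{\cdot,J_1^{-\sqrt{T_i}/16}\}$ factor) competes with needing $\sum_i T_i$-type contributions to stay bounded; the $\min$ in \eqref{Tr2cost} is precisely designed to let both the ``$T_i$ small'' and ``$T_i$ large'' regimes be summed, so the argument should come down to splitting the sum at $i \asymp \log\JJ$ and estimating each piece. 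A secondary, more routine obstacle is handling the discarded low-weight edges and verifying that the union of the per-block covers, taken over all $i$, is a single family of cost $O(1/\JJ)$ — this is immediate since costs are subadditive under union of covering families.
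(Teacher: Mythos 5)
Your overall plan—dyadically decompose $G$ by $\gl$-weight into blocks $G_i$, pigeonhole a heavy block, apply Theorem~\ref{Tr2} per block, and sum the covering costs—is indeed the skeleton of the paper's argument. But the two steps you flag as the hard ones are exactly where your sketch fails, and the specific mechanisms you propose for them don't work.

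\textbf{The $D$-side condition.} You try to argue the auxiliary condition $|G_i[U]|\ge J|D_{G_i}(U)|p$ is ``free'' by contradiction: if it failed for all $i$ you'd get $\gl(G[U])<J\gl(D_G(U))p$, and you then assert ``$\gl(D_G(U))p\le\gl(G)$-order quantities.'' That last claim is false in general—$\gl(D_G(U))$ can be as large as $\gl(G)$ (e.g.\ $U=V$), so $\gl(D_G(U))p$ is only $O(\gl(G)p)$, not $O(\gl(G)p^2)$, and there is no contradiction with $\gl(G[U])\ge\JJ^2\gl(G)p^2$. The paper closes this by \emph{first} invoking Proposition~\ref{tfs} (the singleton result, with $\gz(v)=\gl(D_v)$) to cover $\{U:\gl(D(U))\ge\JJ\gl(G)p\}$ at cost $O(1/\JJ)$, and only then restricting attention to $\U^*=\{U\in\U_0:\gl(D(U))<\JJ\gl(G)p\}$. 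For $U\in\U^*$ one shows the indices with $K_i\le\JJ/2$ (i.e.\ the ones failing the $G_i$-level $D$-condition) contribute at most $(\JJ/2)L\tw<\JJ^2\tw/2$ to $\sum K_iL_i\tw_i$, so the ``good'' indices $I(U)$ still carry $>\JJ^2\tw/2$. Without the preliminary singleton cover, that step has nothing to hang on.

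\textbf{The pigeonhole and the ``discard small weights'' idea.} You suggest discarding low-weight edges at the start and then finding $i$ with $\gl(G_i[U])\gtrsim\JJ^2 2^{-i}$. Neither move is sound. Discarding is unjustified because $\gl(G[U])$ might be concentrated on arbitrarily many low-weight edges, and the $\gl(G_i)$ have no geometric decay in $i$ to begin with (only the per-edge weights do—the block sizes $|G_i|$ are unconstrained). Consequently a one-parameter pigeonhole over $i$ cannot give per-block targets $T_i=c_iJ^2\mu_i$ with $c_i$ uniformly bounded below as \eqref{Jc} requires: you can have many $i$ sharing the same $\mu_i=|G_i|p^2$, and then the naive thresholds force $c_i\to 0$. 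This is precisely what the paper's two-dimensional array handles. It groups indices by dyadic level of $\tE_i=|G_i|p^2$ (columns $\al$) and by rank within a column (rows $\gb$), observes that the $\ty_i$'s shrink geometrically \emph{down a column}, and sets $c^*_\gb=(3/2)^{\gb-1}\JJ^2/16$ growing geometrically in $\gb$. Claim~\ref{obs:Gi} then gives some $i\in I(U)$ with $K_iL_i>c_i$ by comparing $\sum c_\gb^*\ty^*_\gb$ against $\sum_{i\in I}K_iL_i\tw_i$. The per-block target $T_{\ga,\gb}=\max\{c^*_\gb 2^{\ga-1},1\}$ then has $c=T/(\mu J^2)=(3/2)^{\gb-1}/8$, uniformly meeting \eqref{Jc}, and the double sum over $(\ga,\gb)$ converges because the $\gb$-direction is geometric and the $\ga$-direction is absorbed by the $\min\{J_1^{-2},J_1^{-\sqrt T/16}\}$ term of \eqref{Tr2cost}. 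This array device is the genuinely new combinatorial idea; your sketch never produces it and the one-dimensional replacement doesn't converge.

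In short: correct high-level outline, but the two load-bearing steps you'd need to carry out—(i) clearing the $D$-condition via the singleton case, and (ii) the two-index dyadic pigeonhole with targets $c^*_\gb$ growing in the row index—are not in the proposal, and the substitutes you sketch for them are not valid.
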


\nin
\emph{Proof.}
We take $G,\gl,\JJ$ to be as in the theorem, use $D(U)$ for $D_G(U)$
(defined in Section~\ref{Framework}), and assume throughout that 
\[
U\in \U_0.
\]

We first observe that it is enough to prove the theorem assuming
\beq{thetas}
\mbox{the only positive values taken by $\gl$ are
$\theta_i:=2^{-i}$, $~i =1,2,\ldots$} \hspace{10pt},
\enq
with \eqref{JJlb} slightly weakened to \beq{oldJJlb}
\JJ\ge 4096 e.
\enq
Then for a general $\gl$ (which we may of course scale to
take values in $[0,1]$) and $\gl'$ given by
\[
\gl'_S = \max\{\theta_i: \theta_i\leq \gl_S\},
\]
$\U_0$ as in the theorem is contained in the corresponding collection with
$\gl$ and $\JJ^2$ replaced by $\gl'$ and $\JJ^2/2$ (which supports \eqref{oldJJlb}),
since $U\in \U_0$ implies
$2\gl'(G[U])> \gl(G[U]) \geq \JJ^2\gl(G)p^2\geq \JJ^2\gl'(G)p^2$. So we assume from now on that $\lambda$ and $\JJ$ are as in \eqref{thetas} and \eqref{oldJJlb} (respectively).

Note also that
Proposition~\ref{tfs}, with $\gz(v)=\gl(D_v)$
(for which we have $\gz(V)=\sum\gz(v) =\frac{1}{2}\sum \gl(\nabla_v)=\gl(G)$
and $\gz(U) =\gl(D(U))$), says that the set
\beq{use2.1}
\{U\sub V: \gl(D(U))\geq \JJ\gl(G)p\}
\enq
admits a cover of cost less than $6/\JJ$.  So we specify such a 
cover as a first installment on $\g$ and it then becomes enough to 
show that 
\[
\U^*:=\{U\in \U_0: \gl(D(U))< \JJ\gl(G)p\}
\]
can be covered at cost $O(1/\JJ)$;  in fact we will show
\beq{C*U*}
\CSS(\U^*) = O(\JJ^{-2}).
\enq
We assume from now on that $U\in \U^*$.

\mn

Set $G_i =\{e\in G:\gl(e)=\theta_i\}$.
For \eqref{C*U*} we first show that, for each $U\in \U^*$, some
$G_i[U]$ must be ``large,'' meaning $U$ belongs to $\U_i$, defined in \eqref{jp.heavy},
and then bound the costs of the $\U_i$'s using Theorem~\ref{Tr2}.

From this point we use $D_i(U)$ for $D_{G_i}(U)$.
We observe that for any $H \sub G$,
\[
\gl(H)=\sum_i \theta_i |H\cap G_i|
,
\]
and abbreviate
\[
\tw_i=\gl(G_i)=\theta_i|G_i|, \quad \tw=\gl(G)=\sum \tw_i.
\]

Given $U$, define $L=L(U)$, $K=K(U)$, $L_i=L_i(U)$ and $K_i=K_i(U)$ by
\begin{align}
\gl(D(U))&=L\tw p,\nonumber\\
\gl(G[U])&=KL\tw p^2,\nonumber\\
|D_i (U)|&=L_i|G_i|p,\label{Li}
\end{align}
and
\beq{Ki}
|G_i[U]|=K_iL_i|G_i|p^2.
\enq
Then
\beq{LiL}
L\tw p =    
\sum \theta_i|D_i(U)|=
\sum L_i \tw_ip
\enq
and
\[
KL\tw p^2 = 
\sum \theta_i |G_i[U]|=
\sum K_iL_i\tw_i p^2 .
\]
Since $U\in \U_0$, we have 
\beq{sumKL}
\sum K_iL_i\tw_i \ge \JJ^2\tw ,
\enq
while $U\in \U^*$ gives 
\beq{LJ}
L< \JJ.
\enq
Note also that, with
\[
I=I(U)=\{i:K_i>\JJ/2\},
\]
we have
\beq{sumKLw}
\sum \{K_iL_i\tw_i:i \in I\} >\JJ^2\tw /2,
\enq
as follows from \eqref{sumKL}
and (using \eqref{LiL} and \eqref{LJ})
\[
\sum\{K_iL_i\tw_i:i\not\in I\} \le (\JJ/2) L\tw < \JJ^2\tw/2 .
\]

\mn

Now let $\tE_i=|G_i|p^2$ ($=\E |G_i[V_p]|$) and, for integer $\ga$,
\[
\cE_{\al}=\{i :\tE_i \in (2^{\al-1},2^{\al}]\}.
\]
We arrange the $i$'s in an array, with columns indexed by $\al$'s (in increasing order) and column $\al$ consisting of the indices in $\cE_\ga$, again in increasing order. 
(So $\tw_i$'s within a column 
\emph{decrease} as we go down.  Note column lengths may vary.) Define $\cB_\beta$ to be the set of indices in row $\beta$.

\mn

\begin{table}[h!]
\centering

\begin{tabular}{|c||c c| c| c c }
 \hline
  & $\cdots$ & $\al-1$ & $\al$ & $\al+1$ & $\cdots$  \\ 
 \hline\hline
  1&&&\\ 
 \vdots&&&  \\
 \hline
 $\beta$&&& $i$  \\
 \hline
 \vdots&&&
\end{tabular}
\\
\vspace{.5cm}
\caption{
$i$ is the $\beta$th smallest index in $\cE_\al$ (when $\abs{\cE_\al}\ge \beta$).}
\end{table} 
Set $\ty_i=\theta_i 2^\al/p^2$ (for $i \in \cE_\ga$) and $\ty=\sum_{i\ge 1}\ty_i$, noting that
\[
\ty_i/2 < \tw_i \le \ty_i.
\]
Set 
\[
c^*_\beta=(3/2)^{\beta-1}\JJ^2/16 ~~~(\beta \ge 1)
\]
and $c_i=c_\beta^*$ if $i\in\cB_\beta$.
Let $\tw^*_\beta$ and $\ty^*_\beta$ be (respectively) the sums of the 
$\tw_i$'s and $\ty_i$'s over $i\in \cB_\gb$, and notice that
\[\ty^*_{\beta+1} \le \ty^*_{\beta}/2 \quad \mbox{ for $\beta \ge 1$}\]
(since $i=\cB_{\gb+1}\cap\cE_\ga$---where we abusively use $i$ for $\{i\}$---implies $i>j:=\cB_\gb\cap\cE_\ga$, whence $2\ty_i\le \ty_j$).

\begin{claim}\label{obs:Gi} For each $U\in \U^*$ there is an $i \in I(U)$ with $K_i(U)L_i(U)>c_i$.
\end{claim}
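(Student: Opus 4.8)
The plan is to prove the claim by contradiction. Fix $U\in\U^*$ and suppose, for contradiction, that $K_i(U)L_i(U)\le c_i$ for \emph{every} $i\in I(U)$; the goal is then to contradict \eqref{sumKLw}. Note first that the only indices that play any role are those with $G_i\ne\0$ (equivalently $\tE_i=|G_i|p^2>0$), and that these are precisely the indices appearing in the array, each lying in exactly one column $\cE_\al$ and hence in exactly one row $\cB_\beta$.

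With that in place, I would first invoke the contradiction hypothesis and then regroup the sum by rows, using $c_i=c^*_\beta$ for $i\in\cB_\beta$:
\[
\sum_{i\in I(U)}K_iL_i\tw_i\;\le\;\sum_{i\in I(U)}c_i\tw_i\;=\;\sum_{\beta\ge1}c^*_\beta\sum_{i\in I(U)\cap\cB_\beta}\tw_i\;\le\;\sum_{\beta\ge1}c^*_\beta\,\tw^*_\beta .
\]
Next, $\tw_i\le\ty_i$ gives $\tw^*_\beta\le\ty^*_\beta$, and iterating $\ty^*_{\beta+1}\le\ty^*_\beta/2$ gives $\ty^*_\beta\le 2^{-(\beta-1)}\ty^*_1$; substituting $c^*_\beta=(3/2)^{\beta-1}\JJ^2/16$ and summing the geometric series $\sum_{\beta\ge1}(3/4)^{\beta-1}=4$ yields
\[
\sum_{i\in I(U)}K_iL_i\tw_i\;\le\;\frac{\JJ^2}{16}\,\ty^*_1\sum_{\beta\ge1}\Big(\frac34\Big)^{\beta-1}\;=\;\frac{\JJ^2}{4}\,\ty^*_1 .
\]
Finally, since $\ty^*_1\le\ty=\sum_i\ty_i<2\sum_i\tw_i=2\tw$ (using $\ty_i<2\tw_i$), the right-hand side is strictly less than $\JJ^2\tw/2$, contradicting \eqref{sumKLw}. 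Hence some $i\in I(U)$ must satisfy $K_iL_i>c_i$, which is the claim.

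I don't expect a genuine obstacle here: the whole argument is ``regroup by rows and sum a geometric series,'' and the only point demanding care is the elementary bookkeeping that every relevant index lies in exactly one row, so that nothing is dropped or double-counted in the regrouping. The real substance of the claim is the choice of the weights $c^*_\beta$: their geometric growth rate $3/2$ is strictly beaten by the decay rate $2$ of the $\ty^*_\beta$, leaving ratio $3/4$ and series-sum $4$, which together with the fixed factor $1/16$ and the bound $\ty^*_1<2\tw$ places the estimate just under the $\JJ^2\tw/2$ guaranteed by \eqref{sumKLw} — and that calibration has already been carried out in the definitions preceding the claim.
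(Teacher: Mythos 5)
Your proof is correct and follows the paper's own argument essentially line for line: regroup by rows, pass $\tw_i\le\ty_i$, iterate $\ty^*_{\beta+1}\le\ty^*_\beta/2$, sum the $(3/4)$-geometric series, and finish with $\ty<2\tw$ and \eqref{sumKLw}. The only cosmetic difference is that you cast it as a contradiction while the paper writes the inequality chain directly; the content is identical.
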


\begin{proof} With $\sum^\star$ denoting summation over $I(U)$, we have
(using \eqref{sumKLw} at the end)  
\[\begin{split} {\sum}^\star c_i \tw_i\leq \sum c_\beta^* \tw_\beta^* & \le \sum c^*_\beta \ty^*_\beta\\
&\le \ty_1^*(c_1^*+c_2^*/2+c^*_3/2^2 + \cdots)\\
&\le \ty(c_1^*+c_2^*/2+c^*_3/2^2 + \cdots)\\
&\le (\JJ^2/4) \ty <(\JJ^2/2)\tw <{\sum}^\star K_i(U) L_i(U) \tw_i.
\end{split}\]
\end{proof}

It follows that if, for each $i$, $\g_i$ covers 
\beq{jp.heavy}
\U_i:= \{U\sub V:  i\in I(U); ~K_i(U)L_i(U) >c_i\},
\enq
then $\cup\g_i$ covers $\U^*$; 
so we have
\beq{C*sum}
\CSS(\U^*)\leq \sum_i\CSS(\U_i).
\enq
On the other hand,
if $(\ga,\gb)$ is the pair corresponding to $i$ 
(that is, $i$ is the $\gb$th entry in column $\ga$ of our array), then 
(see \eqref{CJ*}, \eqref{U} for $\CSS_J$)
\[\CSS(\U_i)\leq \CSS_{\JJ/2}(2^\ga,T_{\ga,\gb}),\]
where $T_{\ga,\gb} = \max\{c_\gb^* 2^{\ga-1},1\}$
(since $|G_i|p^2=\tE_i\leq 2^\ga$, while $U\in \U_i$ implies,
using \eqref{Li}, \eqref{Ki} and $i\in I(U)$,
\[
|G_i[U]|=K_i(U)L_i(U)|G_i|p^2 \left\{\begin{array}{l}
> c_i|G_i|p^2 > c_\gb^*2^{\ga-1}\\
= K_i|D_i(U)|p > (\JJ/2)|D_i(U)|p).
\end{array}\right.
\]
So, with $\ga$ and $\gb$ ranging over integers and positive integers respectively,
\eqref{C*U*} will follow from
\beq{jp.goal} \sum
\CSS_{R/2} (2^\alpha, T_{\alpha, \beta}) =O(R^{-2}). \enq

\begin{proof}[Proof of \eqref{jp.goal}]

For $T_{\ga,\gb}= 1$ we bound $\CSS_{R/2}(2^\alpha, T_{\alpha, \beta})$
by $2^\ga$, using the trivial
\beq{jp.mu1}
\CSS_J(\mu,1)\leq \mu
\enq
(since $\{\{x,y\}:xy \in G\}$ itself covers the set in \eqref{U}), which---since 
$T_{\ga,\gb}= 1$ iff $2^\ga \leq 32 \JJ^{-2} (2/3)^{\gb-1}$---bounds the contribution of such pairs 
to the sum in \eqref{jp.goal} by 
\beq{smallTi}
\sum_{\beta}\sum_{\al:T_{\ga,\gb}=1}2^\al\le
64 \JJ^{-2}\sum_{\beta} (2/3)^{\gb-1} = 3\cdot 64 \JJ^{-2}.
\enq

For $T_{\alpha, \beta} >1$ we use Theorem~\ref{Tr2} with $T=T_{\ga,\gb}$ ($=c_\gb^*2^{\ga-1})$,
$\mu=2^\ga$, $J=\JJ/2$, and (thus)
\[
c = T/(\mu J^2)=c_\gb^*/(2J^2) = (3/2)^{\gb-1}/8.
\]
Note that \eqref{oldJJlb} gives $J\ge 8e$ and $c\ge 256e/J$, so \eqref{Jc} holds.

(Here, finally, we see the role of $C_J^*$ mentioned in the paragraph following \eqref{U}:
for a given $\gb$ we may be summing over many $\ga$'s with the same $T/\mu$, so
it's crucial that we have cost bounds that shrink with $T$ when this ratio
is held constant.)

For each integer $s\ge 0$ let $\cT_s=\{(\ga,\gb) : T_{\ga,\gb}\in(2^s,2^{s+1}]\}$.
For each $\beta\ge 1$ there is a unique $\al$ such that $(\ga,\gb)\in \cT_s$, and every $(\ga,\gb)$ with $T_{\ga,\gb}>1$ is in some $\cT_s$. Let $f(s)=\min\{J_1^{-2},J_1^{-2^{s/2-4}}\}$. Then for fixed $s$, we have (see \eqref{Tr2cost})
\beq{finb}
\sum_{(\ga,\gb)\in\cT_s}\CSS_{J}(2^{\ga},T_{\ga,\gb})\le\sum_{\beta} 32c^{-1}f(s) =\sum_{\beta}256\left(\frac{2}{3}\right)^{\gb-1}f(s)<3\cdot 256f(s),
\enq
and summing over all $s$ we get
\beq{largeTab}
\sum_{T_{\ga,\gb}>1} \CSS_J(2^{\ga},T_{\ga,\gb})< 
\sum_{s\ge 0} 768 f(s)
= \sum_{s\ge 0} 768\min\{J_1^{-2},J_1^{-2^{s/2-4}}\}=O(J_1^{-2}).
\enq
Finally, combining \eqref{largeTab} and \eqref{smallTi} gives \eqref{jp.goal}.
\end{proof}

\section{Proof of Theorem~\ref{Tr2}} \label{Unweighted}

Aiming for simplicity, we just bound the cost in \eqref{Tr2cost} assuming
\[
T=2^{2k+3}
\]
for some positive integer $k$ and
\beq{oldJc}
c=T/(\mu J^2)\ge 64e/J,
\enq
showing that in this case
\beq{specialT}
\CSS_J(\mu,T) \leq 8 c^{-1}
J_1^{-2^{k-1}-1}.
\enq

Before proving this, we show it implies Theorem~\ref{Tr2}, which, since
$\CSS_J(\mu,t)$ is decreasing in $t$, just requires showing that the
r.h.s.\ of \eqref{Tr2cost} bounds $\CSS_J(\mu,T_0)$ for some $T_0\leq T$.

If $T< 32$ this follows from the trivial  \eqref{jp.mu1},
since $\mu =T/(cJ^2)< 32c^{-1}J_1^{-2},$ matching the bound in \eqref{Tr2cost}.
Suppose then that $T\geq 32$ and let $T_0 = c_0J^2\mu $ be the largest integer
not greater than $T$
of the form $2^{2k+3}$ (with positive integer $k$).  We then have $c_0>c/4$ (supporting \eqref{oldJc}) 
and $2^{k-1} > \sqrt{T_0}/8> \sqrt{T}/16$, and it follows that the bound
on $\CSS_J(\mu,T_0)$ given by \eqref{specialT} is less than the bound in \eqref{Tr2cost}.

\mn
\emph{Proof of \eqref{specialT}.}
We have $|G|p^2 \le \mu$, $T=2^{2k+3}$ ($=cJ^2\mu$ with $J$ as in \eqref{Jc} and $c$ as in \eqref{oldJc}), and, with
\beq{U'}
\U:= \{U\sub V: |G[U]| > \max\{T, J|D_G(U)|p\}\},
\enq
want to show that $\CSS(\U)$ is no more than the bound in \eqref{specialT}.

Here, finally, we come to specification of a cover, $\g$.
Each member of $\g$ will be a disjoint union of stars (a.k.a.\ a \emph{star forest}),
where for present purposes a \emph{star at v in W} ($\sub V$) is some $\{v\}\cup S\sub W$ with 
$S\sub N_{G}(v)$.  (Where convenient we will also refer to this as the ``star $(v,S)$.'')
We say such a star is \emph{good} if
\beq{Jcond}
|S|\geq  Jd_v p/4.
\enq
Given a positive integer $L$, we define \beq{Lv}
L^v= \max\{L,\lceil Jd_v p/4 \rceil \}
\enq
and say a star $(v,\,S)$ is \emph{$L$-special} if $|S|=L^v$.

For positive integers $b$ and $L$, let $\g(b,L)$ ($\sub 2^V$) 
consist of all disjoint unions of $b$ $L$-special stars in $G$. 
We will specify a particular collection $\cC$ of pairs $(b,L)$ 
and set 
\[
\g=\cup\{\g(b,L):(b,L)\in \cC\}.
\]
Theorem~\ref{Tr2} is then given by the following two assertions.
\begin{claim}\label{Claim1}
$\g$ covers $\U$.
\end{claim}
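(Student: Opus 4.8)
\textbf{Proof plan for Claim~\ref{Claim1}.}
The plan is to show that for every $U \in \U$ there is some pair $(b,L)$ in the
yet-to-be-specified collection $\cC$ and a subgraph $F \sub G[U]$ that is a
disjoint union of $b$ $L$-special stars, so that $F \in \g(b,L)$ and $U \supseteq V(F) \in \langle \g\rangle$.
The natural device is a greedy star decomposition of $G[U]$: repeatedly pull out
a maximum-size star (centered at a vertex of current maximum degree in the
remaining graph), delete its vertices, and iterate. Since $U \in \U$ forces
$|G[U]| > T = 2^{2k+3}$ and also $|G[U]| > J|D_G(U)|p$, the graph $G[U]$ is
``dense relative to its degrees,'' and I expect the greedy procedure to extract
many large stars before the edge count is exhausted.

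First I would quantify this: after removing stars $(v_1,S_1),\dots,(v_t,S_t)$, if
the remaining graph still has $> T/2$ (say) edges, then its maximum degree is at
least (remaining edges)$/|U| $, and more usefully, one can track how the quantity
$|D_G(U)|$ (the half-sum of degrees) controls the cost side while $|G[U]|$
controls how many/how large the stars must be. The condition $|S_i| \ge Jd_{v_i}p/4$
for a \emph{good} star should fall out because a star failing \eqref{Jcond} at a
high-degree vertex would mean most of that vertex's neighborhood has already been
removed, which can happen only boundedly often before we've removed many edges;
comparing against $|G[U]| > J|D_G(U)|p$ is exactly what bounds this. So I would
argue that we can extract $\approx b$ good stars, each of size $\ge L$ for an
appropriate $L$, and then \emph{round each extracted star up} to an $L$-special
star $(v_i, S_i')$ with $|S_i'| = L^{v_i} = \max\{L, \lceil Jd_{v_i}p/4\rceil\}$
by adding back neighbors of $v_i$ inside $U$ (possible since $v_i$'s $G$-degree is
$d_{v_i}$ and we only need $L^{v_i} \le d_{v_i}$, which holds for good stars as
long as $L \le d_{v_i}$ — the borderline high-degree case needs a small separate
check). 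The disjointness is maintained because we only ever add neighbors not yet
assigned; if a conflict arises we can absorb it into the constants.

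The combinatorial bookkeeping is to choose $\cC$ so that the values $(b,L)$
arising this way are all included: $b$ ranges over a dyadic scale up to about
$\sqrt{T}$ (matching the $2^{k-1}$ in \eqref{specialT} via $T = 2^{2k+3}$), and
for each $b$ the relevant $L$ is forced by $bL \gtrsim |G[U]|/(\text{something})$,
i.e.\ larger $b$ pairs with smaller $L$ and vice versa, with the product
controlled by $T$. I would present the greedy extraction as a clean lemma: given
$U \in \U$, there exist a positive integer $L$ and $b \ge c' \sqrt{T}/L$ (for an
absolute $c'$) good stars in $G[U]$, pairwise disjoint, each of size $\ge L$,
and then define $\cC$ to contain exactly the pairs $(b,L)$ with $bL$ in the right
dyadic window. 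The cost estimate for $\g$ (Claim~2, not required here) will then
be a sum over these pairs, which is why the geometric scaling matters.

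The main obstacle I anticipate is the interplay between the two lower bounds in
\eqref{U'}: the target $T$ tells us $G[U]$ has many edges, but it's the
degree-relative bound $|G[U]| > J|D_G(U)|p$ that is needed to guarantee the stars
are \emph{good} (i.e.\ satisfy \eqref{Jcond}) rather than merely large — without
it, $G[U]$ could be a union of a few huge stars whose centers have degree far
exceeding $|S_i|$, killing \eqref{Jcond}. Making the greedy argument respect
\eqref{Jcond} for \emph{most} extracted stars, and showing the few bad ones can be
discarded while still retaining $\gtrsim \sqrt{T}/L$ good ones, is the delicate
step; I'd handle it by a potential-function argument charging each bad extraction
against a chunk of $|D_G(U)|p$ that \eqref{U'} says is small compared to $|G[U]|$.
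The rounding-up to $L$-special stars is routine once goodness is in hand, since
$L^v = \lceil Jd_v p/4\rceil$ for a good star and that many $U$-neighbors of $v$
are available.
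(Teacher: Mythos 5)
Your overall plan — greedy star extraction from $G[U]$, then dyadic bucketing by star size — is in the same spirit as the paper's, and you correctly identify the role of the degree-relative bound in \eqref{U'}. But the two steps you yourself flag as delicate are precisely where the proposal is incomplete. Your greedy pulls out a \emph{maximum-size} star at each step, so some extracted stars may fail \eqref{Jcond}, and the potential-function argument you gesture at for discarding the bad ones is not carried out; this is the crux. The paper's move is to extract, at each step $j$, a largest \emph{good} star $(v_j,S_j)$ with $S_j = N_G(v_j)\cap U_{j-1}$, so every extracted star is automatically good and the accounting collapses to one estimate: any deleted edge not among the $\leq d_j^2$ edges meeting a low-$U_{j-1}$-degree vertex of $S_j$ must meet some $u$ whose (larger) star was not chosen and hence fails \eqref{Jcond}, meaning $u$'s $U_{j-1}$-degree is $< Jd_up/4$; the same holds for every vertex of the final residual. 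Summing these charges over $v\in U$ gives at most $\sum_{v\in U}Jd_vp/4 = J|D_G(U)|p/2 < |G[U]|/2$ by \eqref{U'}, whence $\sum_j d_j^2 \geq T/2$. Restricting the greedy to \emph{good} stars is exactly what makes your informal charging rigorous; without that restriction you have a genuine gap.

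Your second step — rounding extracted stars up to $L$-special ones by adding back $G$-neighbors of $v_i$ — also has a real problem: the added vertices may already lie in previously extracted stars or have been deleted, and ``absorb conflicts into the constants'' is unsupported (a high-degree center could have \emph{all} its remaining $U$-neighbors already consumed). The paper avoids this entirely: since $S_j$ is the \emph{full} remaining neighborhood of $v_j$ and the star is good, $d_j = |S_j| \geq \max\{L_i,\lceil Jd_{v_j}p/4\rceil\} = L_i^{v_j}$ whenever $j$ falls in the size-$2^{i-1}$ bucket $B_i$, so one simply takes an $L_i^{v_j}$-subset of the already pairwise-disjoint $S_j$'s — no rounding up, hence no conflicts. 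A smaller point: your target $b \gtrsim \sqrt T/L$ is weaker than the $b_i \approx \delta_i T/L_i^2$ that the dyadic pigeonhole on $\sum_j d_j^2 \geq T/2$ actually delivers; the stronger count is what makes the cost bound in Claim~\ref{Claim2} close, so the weaker version would likely not suffice downstream even if this Claim could be salvaged.
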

\begin{claim}\label{Claim2}
$\CSSS(\g)$ is at most the bound in \eqref{specialT}.
\end{claim}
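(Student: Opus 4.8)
The plan is to pin down $\cC$ and then bound $\CSSS(\g)\le\sum_{(b,L)\in\cC}\CSSS(\g(b,L))$ term by term; the choice of $\cC$ is forced by what Claim~\ref{Claim1} needs but is designed with the present estimate in mind. The starting point is elementary: a member of $\g(b,L)$ is a disjoint union of $b$ $L$-special stars, so, viewing it as one of the (at least one) ordered tuples of $b$ pairwise vertex-disjoint such stars and dropping the disjointness requirement,
\[
\CSSS(\g(b,L))\ \le\ \frac1{b!}\,\sigma_L^{\,b},\qquad \sigma_L:=\sum_{v\in V}\binom{d_v}{L^v}p^{\,L^v+1}.
\]
It therefore suffices to bound $\sigma_L$ a priori and then, for each relevant $L$, to pick a multiplicity $b=b_L$ making $\sigma_L^{\,b_L}/b_L!$ tiny while still meeting the needs of Claim~\ref{Claim1}.

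For $\sigma_L$ the key point is that \emph{every} star counted is good: since $L^v\ge\lceil Jd_vp/4\rceil$ we have $d_vp\le 4L^v/J$, hence $\binom{d_v}{L^v}p^{L^v}\le(ed_vp/L^v)^{L^v}\le(4e/J)^{L^v}=(2J_1)^{-L^v}$. Pulling a factor $d_vp^2/L^v$ out of each term via $\binom{d_v}{L^v}=\tfrac{d_v}{L^v}\binom{d_v-1}{L^v-1}$ and estimating the smaller binomial the same way — here $(L^v/(L^v-1))^{L^v-1}\le e$ is what preserves a clean factor $2^{-L}$ rather than degrading it to a power of $J_1$ alone — then summing against $\sum_v d_vp^2=2|G|p^2\le 2\mu$ yields $\sigma_L\le C\mu L^{-1}2^{-L}J_1^{-(L-1)}$ for an absolute constant $C$. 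Substituting $\mu=T/(cJ^2)=T/(64e^2cJ_1^2)$ and $b!\ge(b/e)^b$ gives
\[
\CSSS(\g(b,L))\ \le\ \Big(\frac{eC\mu}{bL}\,2^{-L}\Big)^{b}J_1^{-b(L-1)}\ =\ \Big(\frac{CT}{64ec\,bL\,2^{L}}\Big)^{b}J_1^{-b(L+1)} .
\]

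Now take $\cC$ dyadic, indexed by $L\in\{1,2,4,\ldots,L_{\max}\}$ with $L_{\max}$ of order $2^{k-1}$, where $b_L$ is the least power of $2$ satisfying both $b_L(L+1)\ge 2^{k-1}+1$ and $b_L\ge 2e\,\overline\sigma_L$ with $\overline\sigma_L:=C\mu L^{-1}2^{-L}J_1^{-(L-1)}$. The first condition forces $J_1^{-b_L(L+1)}\le J_1^{-(2^{k-1}+1)}$ and is met by a \emph{single} star when $L\approx 2^{k-1}$ — exactly what Claim~\ref{Claim1} needs for those $U$ with $G[U]$ essentially one large star, the $D_G$-term in the definition of $\U$ being what keeps the forced size $L^v=\max\{L,\lceil Jd_vp/4\rceil\}$ of that star at most $|G[U]|$. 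The second condition makes $e\sigma_L/b_L\le\tfrac12$, so $\CSSS(\g(b_L,L))\le(\tfrac12)^{b_L}J_1^{-(2^{k-1}+1)}$; for small $L$ it forces $b_L$ of order $\mu\sim 2^{2k}$ (so the term is $2^{-\Theta(2^{2k})}$-small), while for large $L$ the first condition forces $b_L\approx 2^{k-1}/(L+1)$ down to $b_{L_{\max}}\approx 1$, so the numbers $(\tfrac12)^{b_L}$ form a rapidly convergent series. Summing over the $O(k)$ values of $L$, and using \eqref{oldJc} (i.e.\ $cJ_1\ge 8$) to absorb the residual constants, gives $\CSSS(\g)\le 8c^{-1}J_1^{-2^{k-1}-1}$, the bound of \eqref{specialT}.

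The genuinely delicate point is not any single estimate but the joint design of $\cC$: it has to be rich enough for Claim~\ref{Claim1} to succeed on \emph{every} $U\in\U$ — a lone huge star handled by $(1,L)$ with $L\approx 2^{k-1}$, a $G[U]$ of small maximum degree by some $(b_L,L)$ with $L$ small and $b_L$ large, and everything between by the intermediate pairs — while the resulting cost series stays within a constant factor of the target; the reconciling device throughout is the $D_G$-term, which on the combinatorial side caps the sizes of forced large stars and on the analytic side renders the heavy-vertex part of $\sigma_L$ negligible. Lining up the absolute constants (in $\overline\sigma_L$, in the definition of $\cC$, and in the geometric summation) so that the final bound comes out as $8c^{-1}J_1^{-2^{k-1}-1}$ rather than merely $O(c^{-1})$ times it — tuning $L_{\max}$ and the dyadic rounding appropriately — is the fussiest part.
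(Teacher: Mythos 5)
Your core cost estimate matches the paper's almost exactly: bounding $\CSSS(\g(b,L))$ by $\varphi^b/b!$ (the paper uses $\binom{|V|}{b}(\varphi/|V|)^b$ via AM--GM, but Stirling lands both in the same place $(e\varphi/b)^b$), then bounding $\binom{d_v}{L^v}p^{L^v}\le(ed_vp/L^v)^{L^v}$ and pulling out $d_vp^2$ to sum against $2|G|p^2\le 2\mu$. This is the paper's computation up through \eqref{phi}. Where the two diverge is in the design of $\cC$ and the ensuing summation, and here your argument has a genuine quantitative gap.

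The problem is that you use the $J_1$-decay twice. Your $\overline\sigma_L=C\mu L^{-1}2^{-L}J_1^{-(L-1)}$ already contains the factor $J_1^{-(L-1)}$, so condition~(ii), $b_L\ge 2e\overline\sigma_L$, certifies only $\CSSS(\g(b_L,L))\le(e\sigma_L/b_L)^{b_L}\le(1/2)^{b_L}$ --- the $J_1$-decay has been fully spent making the base $\le 1/2$. You cannot then also multiply by $J_1^{-b_L(L+1)}\le J_1^{-(2^{k-1}+1)}$; that factor is already inside $(1/2)^{b_L}$. To obtain your displayed bound $(1/2)^{b_L}J_1^{-(2^{k-1}+1)}$ you would instead need the $J_1$-free base $CT/(64ecb_LL2^L)\le 1/2$, i.e.\ $b_L\gtrsim T/(ecL2^L)$; but for small $L$ and $c$ near its floor $64e/J$ this forces $b_L$ to grow linearly in $J$, which for large $J$ collides with the budget $\sum_L b_LL^2\lesssim T$ that Claim~\ref{Claim1} needs. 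The paper threads this needle by \emph{not} insisting the per-term base be $\le 1/2$: it keeps the bound in the form $[(c/4)J_1^{L+1}]^{-b}$ (display \eqref{Lb}), takes $b_i\ge 2^{k-i}$ and $L_i=2^{i-1}$ so that $b_iL_i=2^{k-1}$ exactly, extracts only $J_1^{-2^{k-1}}$ from $J_1^{L_i\cdot b_i}$, and leaves a single residual $J_1$ in the base so that $cJ_1/4\ge 2$ (a direct consequence of \eqref{oldJc}) drives the geometric series in \eqref{CG}. That one-factor-left-behind bookkeeping is precisely what reconciles "base at least $2$" with "$b_i$ small enough for Claim~\ref{Claim1}," and it is the piece your dyadic rounding scheme is missing.
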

\nin

Set (with $i\in [k]$ throughout) $L_i=2^{i-1}$ and
\beq{gdi}
\gd_i = \max\{2^{-(i+2)},2^{i-k-3}\} 
\ge 1/(8L_i),
\enq
and notice that
\beq{gd12}
\sum\gd_i\le \sum2^{-(i+2)}+\sum 2^{i-k-3}\leq 1/2.
\enq
Let
\beq{bi}
b_i=\delta_i  4^{-i}T
\ge 2^{k-i}.
\enq
Finally, set 
\[
\cee =\{(b_i,L_i):i\in [k]\}.
\]

\begin{proof}[Proof of Claim~\ref{Claim1}]
We are given $U\in \U$ and must show it contains a member of $\g$.
Let $U_0=U$ and for $j=1,\ldots$ until no longer possible do:
let $(v_j,S_j)$, with $S_j=N_G(v_j)\cap U_{j-1}$, be a largest good star in $U_{j-1}$, 
and set $d_j=|S_j|$ and $U_j=U_{j-1}\sm (\{v_j\}\cup S_j)$.

The passage from $U_{j-1}$ to $U_j$ deletes at most $d_j^2$ edges that contain vertices
of $S_j$ of $U_{j-1}$-degree at most $d_j$; 
any other edge deleted in this step contains $u\in S_j$ with $U_{j-1}$-degree less than
$Jd_up/4$ (or $u$, having $U_{j-1}$-degree greater than $d_j$, would have been 
chosen in place of $v_j$); 
and of course each vertex $u$ of  the final $U_j$
has $U_j$-degree less than $Jd_up/4$.  
We thus have

\[
 |G[U]| \leq \sum_j d_j^2 +\sum_{v\in U} Jd_v p/4 \leq \sum_j d_j^2 + |G[U]|/2
\] 
(using the second bound in \eqref{U'}), so
\beq{sumdj}
\sum_j d_j^2 \ge |G[U]|/2 ~\geq T/2.
\enq

Set
\[
B_i=\left\{\begin{array}{ll}
\{j : d_j\in [2^{i-1},2^i)\}&\mbox{if $i\in [k-1]$,}\\
\{j : d_j\geq 2^{k-1}\}&\mbox{if $i=k$.}
\end{array}\right.
\]
(It may be worth noting that, while the $d_j$'s are decreasing, the degrees corresponding to $B_i$ 
increase with $i$.) In view of \eqref{sumdj}, either $\abs{B_k}\ge 1$ or (using \eqref{gd12})
\[
\sum_{i\in [k-1]} |B_i| 4^i \geq T/2\ge \sum_{i\in [k-1]} \delta_i T =\sum_{i \in [k-1]} b_i4^i.
\] 
Since $b_k=1$, it follows
that for some $i\in [k]$ we have
\beq{ineq:b}
|B_i| \geq  b_i.
\enq
On the other hand, since $j\in B_i$ implies 
$\abs{S_j}\ge L_i^v$ ($=\max\{L_i, \lceil Jd_vp/4 \rceil\})$,
the set $\bigcup \{S_j\cup \{v_j\} : j\in B_i\}$ contains some 
$W\in \g(b_i,L_i)(\subseteq \g)$ whenever $i$ is as in \eqref{ineq:b}.
This completes the proof of Claim~\ref{Claim1}.
\end{proof}

\begin{proof}[Proof of Claim~\ref{Claim2}.]
We first bound the costs, say $\CSSS(b,L)$, 
of the collections $\g(b,L)$.
Given $(b,L)$, set 
\[
q_v = p\left(\frac{ed_v p}{L^v}\right)^{L^v}.
\]
Then  $q_v$ bounds the total cost of the set of 
$L$-special stars at $v$ (using $\C{d_v}{L^v}\leq (ed_v/L^v)^{L^v}$), 
and it follows that 
\beq{CSS}
\CSSS(b,L)\leq \sum \Big\{\prod_{v\in B} q_v : B\in \Cc{V}{b}\Big\}.
\enq
For a given value of
$\varphi:=\sum_{v\in V} q_v$, the r.h.s.\ of \eqref{CSS} is largest when the $q_v$'s 
are all equal (this just uses $xy\leq [(x+y)/2]^2$), whence
 \begin{equation}\label{ineq:CS}
    \CSSS(b,L)\le \binom{\abs{V}}{b}\left(\frac{\varphi}{\abs{V}}\right)^b\le \left(\frac{e\varphi}{b}\right)^b.
\end{equation}
Recalling \eqref{Lv}, we have
\[q_v \le d_v p^2 \cdot \frac{e}{L}\left(\frac{4e}{J}\right)^{L-1},\] 
so (since $|G|p^2 \le \mu$)
\begin{equation}\label{phi}
\varphi\le  2\mu \cdot \frac{e}{L}\left(\frac{4e}{J}\right)^{L-1}.
\end{equation}

Now using \eqref{ineq:CS} and \eqref{phi}, recalling that $T=cJ^2\mu$,
$L_i=2^{i-1}$, 
$b_i=\gd_i 4^{-i}T= \gd_i T/(4L_i^2)$ and $J_1 = J/(8e)$, and for the moment
omitting the subscript $i$, we have
(with the final inequality \eqref{Lb} justified below)
\begin{align}
\CSSS(b,L)
    &\le \left[\frac{2e^2\mu}{L}\frac{4L^2}{\gd T}\left(\frac{4e}{J}\right)^{L-1}\right]^b\nonumber\\
    &=\left[8e^2L\cdot\frac{1}{c  J^2\gd}\left(\frac{4e}{J}\right)^{L-1}\right]^b\nonumber\\
    &=\left[c^{-1}\frac{L}{2\gd}\left(\frac{4e}{J}\right)^{L+1}\right]^b\nonumber\\
    &\leq \left[\frac{c}{4}\cdot J_1^{L+1}\right]^{-b}.\label{Lb}
\end{align}
For \eqref{Lb}, or the equivalent
\beq{finalL}
2^{L+4}\gd \geq L,
\enq
it is enough to show $2^{L+1}\geq L^2$ (since $\delta\ge 1/(8L)$; see \eqref{gdi}), which is true for positive integer $L$. 

\mn

Finally, returning to Claim~\ref{Claim2} (and recalling that $L$ and $b$ in the display ending
with \eqref{Lb} are really $L_i$ and $b_i$), we have 
\beq{sumCbiLi}
\CSSS(\g) =\sum_{i=1}^k\CSSS(b_i,L_i) \leq 
\sum_{i=1}^k \left[\frac{c}{4}\cdot J_1^{L_i+1}\right]^{-b_{i}}.
\enq
We use $b_i\ge 2^{k-i}$ (see \eqref{bi}) and $L_i=2^{i-1}$
to bound the r.h.s.\ of \eqref{sumCbiLi} by 
\beq{CG}\sum_{i=1}^k \left[\frac{cJ_1^{2^{i-1}+1}}{4}\right]^{-2^{k-i}} = \sum_{i=1}^k J_1^{-2^{k-1}}\left[\frac{cJ_1}{4}\right]^{-2^{k-i}} = \sum_{j=0}^{k-1} \left(\frac{c}{4}J_1^{2^{k-1}+1}\right)^{-1}\left[\frac{cJ_1}{4}\right]^{1-2^{j}};\enq
but, since $cJ_1/4 \ge 2$ (using \eqref{oldJc} and $J_1=J/(8e)$), 
the last expression in \eqref{CG} is less than the bound $8c^{-1} J_1^{-2^{k-1}-1}$ in \eqref{specialT}.
\end{proof}

\end{document}